\newcommand{\R}{\mathbb{R}} 
\newcommand{\C}{\mathbb{C}}
\newcommand{\N}{\mathbb{N}} 
\newtheorem{theorem}{{Teorema}}[section]
\newtheorem{theo}[theorem]{{Theorem}}
\newtheorem{lema}[theorem]{{Lemma}}
\newtheorem{corol}[theorem]{{Corollary}}
\newtheorem{obs}[theorem]{{Remark}}
\newtheorem{defin}[theorem]{{Definition}}
\newtheorem{prop}[theorem]{{Proposition}}
\title[A nonlocal Chafee-Infante Problem without uniqueness]{Existence, regularity and asymptotic behavior of solutions for a nonlocal Chafee-Infante Problem via semigroup theory}
\author{Tom\'{a}s Caraballo, A. N. Carvalho, Yessica Julio }
\address[TC]{Depto. Ecuaciones Diferenciales y Anal. Num.\\
Facultad de Matem\'{a}ticas\\
Universidad de Sevilla\\
C/ Tarfia s/n\\
41012-Sevilla (Spain)}
\email[TC]{caraball@us.es}
\address[ANC and YJ]{
Instituto de Ci\^{e}ncias Ma\-te\-m\'{a}\-ti\-cas e de Computa\c{c}\~{a}o 
Universidade de S\~{a}o Paulo, Campus de S\~{a}o Carlos, Caixa Postal 668, S\~{a}o Carlos SP, Brazil.}
\email[ANC]{andcarva@icmc.usp.br}
\email[YJ]{yessica.julio@usp.br}
\thanks{[TC] Partially supported by the Spanish Ministerio de Ciencia e Innovaci{\'o}n (MCI), Agencia Estatal de Investigaci{\'o}n (AEI) and Fondo Europeo de Desarrollo Regional (FEDER) under the project PID2021-122991NB-C21.}
\thanks{[ANC] Partially supported by FAPESP Grant \# 20/14075-6 and by CNPq Grant \# 308902/2023-8, Brazil}
\thanks{[YJ] Partially supported  by CAPES Grant \# 88887.695331/2022-00 and  by the Colombian Ministerio de Ciencia, Tecnolog\'{\i}a e Innovaci\'{o}n (Minciencias).}
\keywords{non-local quasilinear parabolic problems without uniqueness, existence and regularity of solutions, comparison results}
\subjclass[2020]{ 35Q30, 35B41, 35K58, 76D05. }
\begin{document}
\begin{abstract}
This article deals with the study of a non-local one-dimensional quasilinear problem with continuous forcing. We use a time-reparameterization to obtain a semilinear problem and study a more general equation using semigroup theory. The existence of mild solutions is established without uniqueness 
with the aid of the formula of variation of constants and asking only a suitable modulus of continuity on the nonlinearity this mild solution is shown to be strong. Comparison results are also established with the aid of the formula of variation of constants and using these comparison results, global existence is obtained with the additional requirement that the nonlinearity satisfy a structural condition. The existence of pullback attractor is also established for the associated multivalued process along with the uniform bounds given by the comparison results with the additional requirement that the nonlinearity be dissipative. As much as possible the results are abstract so that they can be also applied to other models.
\end{abstract}
\maketitle
\thispagestyle{empty} 
\section{Introduction}
Recently there has been much interest in the study of reaction-diffusion equations with non-local terms (see, e.g., \cite{chipot2003remarks,michel2001asymptotic, chipot1999asymptotic}).  Several developments have been achieved relatively to the associated elliptic problem and many interesting new features have appeared with respect to the local case. This makes the study of these problems interesting and challenging. \\
These developments can lead to a better understanding of the asymptotics of the solutions of the model and that can lead to better predictions of the behavior of the modeled phenomenon and allow for implementation of control strategies.
Of course, to understand the asymptotics of a given model, one must first tackle the problem of ensuring that solutions exist and that may become a quite challenging task requiring the usage of, sometimes challenging, methods (such as Faedo - Galerkin) or theories (such as semigroup theory and fixed point theorems) of mathematical analysis.
The prototype of the problem we want to study is an initial value problem of the form:
\begin{equation}\label{problema}
\left\{ \begin{array}{lcc}
\dfrac{\partial w}{\partial \tau} -a(l(w)) \dfrac{\partial w^2}{\partial x^2}=\lambda f(w) + h(\tau), \,\,\,\,\,\,\,\,\,\, \thinspace \tau>\sigma, \,\, x \in \Omega,  \\
\\ w(\tau,0)=w(\tau,1)=0,\\
\\ w(\sigma,x)=w_0(x),
\end{array}
\right.
\end{equation}
\noindent where $\Omega=(0,1),\:\lambda > 0$, $w_0\in H^1_0(0,1)$ , $l$ an operator from $H^1_0(0,1)$ to $\mathbb{R}$ and $a$, $f$ and $h$ are continuous functions satisfying some additional conditions depending on what are the properties that we will require for the solutions.
This equation has been investigated in \cite{caballero2021existence,chang2003nonlinear, chipot1999asymptotic, chipot2003asymptotic, michel2001asymptotic}, from different perspectives and appeared in the literature for the first time in $1974$ (see \cite{chafee-infante,chafee1974bifurcation}).
The existence of a solution of a differential equation can be shown in different ways, in the case of our prototype problem \eqref{problema} the authors in \cite{ruben} proved that, if $f \in C(\R)$, $a \in C(\R^+)$ is such that $a(s)\geqslant m > 0$, $h \in L^2_{loc}(0,+ \infty; L^2(\Omega))$, and if there are constants $\beta, \gamma > 0$ such that $f(u)u \leqslant \gamma + \beta u^2$ for all $u \in \R,$ the initial value problem has a solution, for which the authors used the Faedo-Galerkin method, which  consists of building a solution of the problem from the construction of solutions of certain finite-dimensional approximations of the original problem and then passing to the limit.\\
We observe that the condition $f(u)u \leqslant \gamma + \beta u^2$, for all $u \in \R$, is not a natural condition for local existence of solutions. It is associated to the method used to construct the solutions, to ensure that all approximating solutions exist in a common interval of existence (in particular, to the fact that solutions are defined for all $t\geqslant 0$).  We wish to give results on existence of solutions which do not require such type of conditions. \\
Our approach will require, for existence of mild solutions (continuous in time functions taking value in a suitable phase space), only that $a$ is bounded away from zero and the continuity of $f$, $a\circ l$, and $h$.  
\bigskip
To obtain that all solutions are global we will need to impose a structural condition
\noindent (\textbf{S}) Assume that there exist $C_0,C_1 \in \R$ such that 

$$
uf(u) \leqslant -\nu C_0u^2 + |u|C_1
$$

for all $u\in \mathbb{R}$ and for both $\nu=\frac{m}{\lambda}$ and $\nu=\frac{M}{\lambda}$.

\bigskip

To prove the existence of a pullback attractor we assume, in addition to (\textbf{S}), a dissipativity condition on $f$. This condition is expressed as: 

\bigskip

\noindent (\textbf{D}) Assume that (\textbf{S}) holds for some $C_0$ such that the first eigenvalue $\lambda_1$ of $A+\nu C_0I$ is positive.

\bigskip

We also tackle the regularity of solutions. In fact, for the existence of a classical solution, we will also need to require that $a\circ l$, $f$ and $h$ satisfy some suitable conditions of integrability for the modulus of continuity in bounded sets. \\
Once the local existence is established for all initial conditions in a suitable space, we obtain global existence of solutions for all initial conditions assuming some almost monotonicity condition (as that of  \cite{ruben}) on the reaction term $f$.\\
Our approach is abstract so that the results may be applied to many other models under suitable conditions. The results are then specialized to the model in \eqref{problema}.\\
The paper is divided as follows. In Section \ref{setting_of_the_problem}, we introduce some essential definitions and precisely state the main results.  In Section \ref{existence_of_solutions}, we prove the results of Section \ref{setting_of_the_problem}, that is,  the existence of mild and classical solutions (without uniqueness). In Section 4, we provide some results about the comparison of solutions. Finally, in Section 5, we recall the notion of multivalued evolution processes to show the existence of a pullback attractor.

\section{Setting of the problem and statement of the results}\label{setting_of_the_problem}

Given a function $u$ that solves the problem

\begin{equation}\label{problemau}
\left\{ \begin{array}{lcc}
u_t = u_{xx} + \dfrac{\lambda f(u) + h(\alpha(t))}{a(l(u))}, \,\,\,\,\,\,\,\,\,\, \thinspace t >\sigma, \,\, x \in \Omega,  \\
\\ u(t,0)=u(t,1)=0,\\
\\ u(\sigma,x)=w_0(x),
\end{array}
\right.
\end{equation}

where $\alpha(t)=:\sigma+\int_{\sigma}^t  a(l(u(r)))dr$. Define $\tau=\alpha(t)$ and $w(\tau)=u(t)$.
Then $w(\tau)$ is a solution of the non-local initial value problem \eqref{problema}. The inverse $\alpha^{-1}:[\sigma,\infty)\to [\sigma,\infty)$ of the function $\alpha:[\sigma,\infty)\to [\sigma,\infty)$ is the function
$$
\alpha^{-1}(\tau)= \sigma +\int_\sigma^\tau a(l(w(r)))^{-1}dr
$$

On the other hand, given a function $w$ that solves  \eqref{problema} making $t\!:=\!\alpha^{-1}\!(\tau)\!=\!\sigma+\!\int_\sigma^\tau \!a(l(w(r)))^{-1}\! dr$ we have that it also solves

\begin{equation}\label{problema'}
\left\{ \begin{array}{lcc}
\dfrac{\partial w}{\partial \tau} -a(l(w)) \dfrac{\partial w^2}{\partial x^2}=\lambda f(w) + h\circ\alpha\circ\alpha^{-1}(\tau), \,\,\,\,\,\,\,\,\,\, \thinspace \tau>\sigma, \,\, x \in \Omega,  \\
\\ w(\tau,0)=w(\tau,1)=0,\\
\\ w(\sigma,x)=w_0(x),
\end{array}
\right.
\end{equation}
and the function $u$ defined by $u(t)=w(\tau)$ satisfies \eqref{problemau}.

\bigskip

Having established these relations between the regular solutions of \eqref{problema} and \eqref{problemau}, we will prove a well posedness result for an abstract model that includes the well posedness of \eqref{problemau} and that will give us also the well posedness of \eqref{problema}.\\
One must note that non only problem \eqref{problemau} remains non-local in space, but now it is also nonlocal in time since the term $h(\alpha(t))$ depends on the solution from the initial time $\sigma$ until the final time $t$. We also note that the change in the time scale, that transformed problem \eqref{problema} into \eqref{problemau}, depends on the specific solution under consideration and therefore it is not a change of variable but a way to transform the problem of finding a solution for the quasilinear problem \eqref{problema} into the simpler problem of finding a solution for semilinear problem \eqref{problemau} which is non-local in time and in space. \\
This method introduces considerable advantages with respect to treating the original problem \eqref{problema} with techniques of quasilinear problems, in particular with respect to techniques involving the use of semigroup theory (see \cite{Lunardi,Yagi}). More specifically, the regularity requirements on the non-local diffusion are much lower when we consider the transformed problem \eqref{problemau}. With respect to the Faedo-Galerkin method, our method is complementary and treats the local existence and global existence differently with different conditions and allows for the usage of the variation of constants formula which is, many times, quite convenient for the study of the asymptotics.

\bigskip

Let $X$ be a Banach space and denote by $L(X)$ the space of bounded linear operators and $C(X)$ the space of continuous functions from X into itself.
Consider the problem
\begin{equation}\label{general}
\left\{ \begin{array}{lcc}
\dfrac{ du(t)}{dt}+  Au(t)= f(t,u^{t}(t),u^t(\cdot))  \,\,\,\,\,\,\,\,\,\, \thinspace t>\sigma, \,\, \\
\\ u(\sigma)=w_0 \in X^\alpha,
\end{array}
\right.
\end{equation}
where $-A$ has compact resolvent and is the infinitesimal generator of an exponentially decaying analytic semigroup $\{e^{-At}:t\geqslant 0\}$, $X^\alpha, \: \alpha \geqslant 0$ is the fractional power spaces associated to $A$ \cite{komatsu1966fractional,henry1981geometric}, and $f:[\sigma,\sigma+T] \times X^\alpha \times C([\sigma,\sigma+T], X^\alpha) \rightarrow X$ is a continuous function.
Here, for $u(\cdot) \in C([\sigma,\sigma+T], X^\alpha)$, $u^{t}(\cdot) = u(\cdot)|_{{[\sigma,t]}}$, $u(t)=u^{t}(t)$. 

\bigskip

For our specific problem, for each $\sigma$, we have 
\begin{equation}\label{gdef}
g(t,u^{t}(t),u^t(\cdot))=\frac{1}{a(l(u(t)))}\left[\lambda f(u(t)) + h(\sigma+ \displaystyle \int_\sigma^t a(l(u(r)))dr)\right].    
\end{equation}
With the purpose of ensuring the existence of  classical solution for \eqref{general}, we will need to impose some additional restrictions on the nonlinear forcing term $f$.\\
One can prove, using arguments similar to those used in \cite[Teorema 6.2.1]{pazy}, that if $f:[\sigma,\sigma+T) \times X^\alpha \times  C([\sigma,\sigma+T], X^\alpha) \rightarrow X$ is continuous, the initial value problem \eqref{general} has a mild solution $u \in C([\sigma,\sigma+T];X^\alpha)$, and that if $f$ is locally H\"{o}lder continuous in $t$ and locally Lipschitzian in $u$, one can guarantee the existence of a unique classical solution (See Theorem 3.3.3 in \cite{henry1981geometric}).\\
Following this reasoning, we prove existence of a mild solution to the problem \eqref{general} for $f:[\sigma,\sigma+T) \times X^\alpha \times  C([\sigma,\sigma+T], X^\alpha) \rightarrow X$ being only continuous, and show the existence of a strong solution for $f:[\sigma,\sigma+T) \times X^\alpha \times  C([\sigma,\sigma+T], X^\alpha) \to X$ being such that:
 Given $u\in C([\sigma,\sigma+T], X^{\alpha})$ 
 
 $$
\|f(t,u^t(t),u^t(\cdot))-f(s,u^s(s),u^s(\cdot))\|_X \leqslant w(\|u(t)-u(s)\|_{\alpha})+w(|t-s|^\beta),\,\,0 <\beta<1-\alpha,
$$

with $w:[0,\infty) \rightarrow [0,\infty)$ being a continuous increasing function, $w(0)=0$ and

$$
   \displaystyle \int_{0}^{t}u^{-1}w(u^{\beta})du < \infty.
$$ 

In our specific case, to ensure the existence of mild solution, it is only necessary $f(\cdot),~h(\cdot)$ and $a(l(\cdot))$  being continuous, and for the existence of strong solution (which is classical by simple bootstrapping arguments) we need that $g:[\sigma,\sigma+T) \times X^\alpha \times  C([\sigma,\sigma+T], X^\alpha) \rightarrow X$ satisfies the following: \\
\begin{itemize}
\item $a$ being continuous and $a(r)\geqslant m > 0$, for all $r\geqslant 0$;
\item $l$ a continuous operator from $X^\alpha$ to $\mathbb{R}^+$; 
\item given $u_0,v_0\in X^\alpha$
\begin{equation}\label{modulusofcont}
\begin{split}
&\| f(u_0)-f(v_0) \|_X \leqslant w(\| u_0-v_0\|_\alpha) \\
&\left\| a(l(u_0))-a(l(v_0)) \right\| \leqslant w(\| u_0-v_0\|_\alpha) ,
\\
&\|h(t)-h(s)\| \leqslant w(|t-s|^\beta),
\end{split}
\end{equation}
where $w:[0,\infty) \rightarrow [0,\infty)$ is a continuous increasing function, $w(0)=0$ and
$$
 \displaystyle \int_{0}^{t}u^{-1}w(u^{\beta})du < \infty,
\hbox{ for some }\beta\in (0,1-\alpha).
$$ 
\end{itemize}

Let us now introduce some more terminology in order to define the solutions that we will be dealing with. We start with the definition of sectorial operators.

\begin{defin}\cite{george}
Let $X$ be a Banach space. A linear operator $A:D(A)\subset X\to X $ is a sectorial operator if it is a closed densely defined operator such that, for some $\phi$ in $(0,\pi/2)$ and some $M \geqslant 1$ and real number $a$, the sector 
\[
\Sigma_{a,\phi} = \{ \lambda \in \C: \phi \leqslant  |arg(\lambda-a)| \leqslant \pi, \; \lambda \neq a \} 
\]
is in the resolvent set of $A$ and 
\[
||(\lambda-A)^{-1}|| \leqslant M/|\lambda-a| \;\;for\;\;all\;\;\lambda \in \Sigma_{a,\phi}.
\]
\end{defin}

Next we present the definition of analytic semigroup.

\begin{defin}\cite{pazy}
Let $\Delta = \{z \in \C: \:\phi_1 < arg\: z < \phi_2,\: \phi_1<0<\phi_2\}$ and for $z \in \Delta  $ let $T(z)$ be a bounded linear operator. The family $\{T(z), \:z \in \Delta \}$ is an analytic semigroup in $\Delta$ if
\begin{itemize}
\item [i.] $z \rightarrow T(z)$ is analytic in  $\Delta.$
\item [ii.] $T(0) = I$ and $\lim\limits_{
z \to 0,\:z \in \Delta
} T(z)x = x $ for all $x \in X^\alpha.$
\item[iii.] $T(z_1 + z_2) = T(z_1)T(z_2)$ for $z_1,\:z_2 \in \Delta.$
\end{itemize}
\end{defin}

A sectorial operator $A$ is such that $-A$ is the generator of an analytic semigroup. 
With this we introduce the definitions of strong and mild solution for \eqref{general}.

\begin{defin}\cite{pazy}
A function $u:[\sigma,\sigma+T) \rightarrow X$ is a strong solution of \eqref{general} on $[\sigma,\sigma+T),$ if $u$ is continuous on $[\sigma,\sigma+T)$ and continuously differentiable on $(\sigma,\sigma+T), \: u(t) \in D(A)$ for $\sigma<t<\sigma+T,~u(\sigma)=w_0$  and \eqref{general} is satisfied on $[\sigma,\sigma+T).$
\end{defin}

\begin{defin}\cite{pazy}
Let $A$ be the infinitesimal generator of an analytic semigroup $T(t)$, $w_0 \in X^\alpha $ and $f :[\sigma,\sigma+T) \times X^\alpha \times  C([\sigma,\sigma+T], X^\alpha) \rightarrow X$.
The function $ u \in C([\sigma,\sigma+T]; X^\alpha) $ such that
\begin{equation}\label{solufraca}
u(t)=T(t-\sigma)w_0 + \int_\sigma^t T(t-s)f(s,u^s(s),u^s(\cdot)) ds, \qquad \sigma \leqslant t \leqslant \sigma+T,
\end{equation}
is called a mild solution of the initial value problem \eqref{general} on $[\sigma,\sigma+T].$
\end{defin}

In \cite[Theorem 6.2.1]{pazy}  the existence of a mild solution for \eqref{general} when $-A$ is the infinitesimal generator of a compact $C_0$ semigroup was proved ($\alpha=0$). We note that often, when passing to applications, generators of compact semigroups occur when $-A$ has a compact resolvent and generates an analytic semigroup $\{T(t): t \geqslant 0\}$.\\
We extend the theory in \cite[Theorem 6.2.1]{pazy} to the case when $-A$ is the infinitesimal generator of a compact analytic semigroup.\\
In Section \ref{existence_of_solutions} we prove  the existence of solutions and their regularity. In fact we prove the following result concerning the existence of solutions.

\begin{theo}\label{mildsolution}
Let $X$ be a Banach space, $A$ a sectorial operator such that $(\lambda + A)^{-1}$ is compact for all $\lambda \in \rho (-A)$, and let $\{T(t): \,t\geqslant 0\}$ be the analytic semigroup generated by $-A$. If $f:[\sigma,\sigma+T]\times X^\alpha \times C([\sigma,\sigma+T],X^\alpha) \rightarrow X, \, 0\leqslant \alpha \leqslant 1$ is a continuous map
then for each  $w_0\in X^\alpha$ there exists a $T_1=T_1(w_0)\in (0,T]$ such that the initial value problem \eqref{general} has a mild solution $u \in C([\sigma,\sigma+T_1];X^\alpha)$. Furthermore, $T_1$  may be chosen uniformly for $w_0$ in bounded subsets of $X^\alpha$.

\end{theo}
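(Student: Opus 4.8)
The plan is to recast \eqref{general} as a fixed-point equation for the variation-of-constants map and apply Schauder's fixed point theorem on a short interval; the absence of any Lipschitz hypothesis forces us to replace the usual contraction argument by a compactness argument, which is exactly where the compactness of the resolvent (equivalently, of $T(t)$ for $t>0$) enters. On $E_{T_1}:=C([\sigma,\sigma+T_1];X^\alpha)$ I would define
\[
(\mathcal F u)(t)=T(t-\sigma)w_0+\int_\sigma^t T(t-s)\,f(s,u(s),u^s(\cdot))\,ds,
\]
so that mild solutions on $[\sigma,\sigma+T_1]$ are precisely the fixed points of $\mathcal F$. I use the convention that $u^s(\cdot)$ is extended to all of $[\sigma,\sigma+T]$ by the constant value $u(s)$ on $[s,\sigma+T]$, so that $s\mapsto u^s(\cdot)$ is continuous into $C([\sigma,\sigma+T];X^\alpha)$ and hence $s\mapsto f(s,u(s),u^s(\cdot))$ is continuous, making the Bochner integral meaningful.

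First I would fix the domain. Since $w_0\in X^\alpha$ and the analytic semigroup restricts to a $C_0$-semigroup on $X^\alpha$, the map $t\mapsto T(t-\sigma)w_0$ lies in $E_{T_1}$, and I set
\[
K=\{u\in E_{T_1}:\ u(\sigma)=w_0,\ \|u(t)-T(t-\sigma)w_0\|_\alpha\le 1\ \text{for all }t\in[\sigma,\sigma+T_1]\},
\]
a closed, bounded, convex set on which $\|u(t)\|_\alpha\le\rho:=1+\|w_0\|_\alpha\sup_{0\le s\le T}\|T(s)\|_{L(X^\alpha)}$. Using that $f$ is bounded on the bounded set $[\sigma,\sigma+T]\times\{\|v\|_\alpha\le\rho\}\times\{\|\phi\|_{C}\le\rho\}$ — which is the natural reading in this parabolic setting and holds in particular for the concrete nonlinearity $g$ of \eqref{gdef}, where $a,l$ and the scalar $f,h$ are continuous — pick $N$ dominating $\|f\|_X$ there. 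The self-mapping $\mathcal F(K)\subseteq K$ then follows from the analytic estimate $\|A^\alpha T(t)\|_{L(X)}\le C_\alpha t^{-\alpha}$: since $\alpha<1$,
\[
\|(\mathcal F u)(t)-T(t-\sigma)w_0\|_\alpha\le C_\alpha N\int_\sigma^t (t-s)^{-\alpha}\,ds=\frac{C_\alpha N}{1-\alpha}(t-\sigma)^{1-\alpha},
\]
which is $\le 1$ once $T_1$ is small. As this bound depends on $w_0$ only through $\rho$, the resulting $T_1$ is uniform for $w_0$ in bounded subsets of $X^\alpha$, giving the final assertion of the theorem.

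It remains to verify the hypotheses of Schauder's theorem on $K$. Continuity of $\mathcal F$ follows from the continuity of $u\mapsto(s\mapsto f(s,u(s),u^s(\cdot)))$ into $C([\sigma,\sigma+T_1];X)$ together with dominated convergence applied to the convolution with the norm-integrable kernel $A^\alpha T(t-s)$. The decisive step is the relative compactness of $\mathcal F(K)$ in $E_{T_1}$, which I would obtain via Arzel\`a--Ascoli. For pointwise compactness I would split, for $0<\epsilon<t-\sigma$,
\[
\int_\sigma^{t}T(t-s)f\,ds=T(\epsilon)\!\int_\sigma^{t-\epsilon}\!T(t-s-\epsilon)f\,ds+\int_{t-\epsilon}^{t}\!T(t-s)f\,ds;
\]
the first summand is the image under the compact operator $T(\epsilon)\in L(X^\alpha)$ of a bounded set, hence relatively compact, while the second has $X^\alpha$-norm $\le C_\alpha N\epsilon^{1-\alpha}/(1-\alpha)$ uniformly in $u$, so $\{(\mathcal F u)(t):u\in K\}$ is totally bounded in $X^\alpha$. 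Equicontinuity would follow by writing $(\mathcal F u)(t_2)-(\mathcal F u)(t_1)$ as the semigroup increment on $w_0$, the short integral over $[t_1,t_2]$, and the tail $\int_\sigma^{t_1}[T(t_2-s)-T(t_1-s)]f\,ds$, the last controlled by $\|[T(h)-I]A^\alpha T(\tau)\|_{L(X)}\le C\,h^\gamma\tau^{-\alpha-\gamma}$ for a small $\gamma$ with $\alpha+\gamma<1$; all three tend to $0$ with $t_2-t_1$ uniformly in $u\in K$. Schauder's theorem then yields a fixed point $u\in K$, i.e. a mild solution in $C([\sigma,\sigma+T_1];X^\alpha)$.

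I expect the compactness step to be the main obstacle: unlike Pazy's $\alpha=0$ case, one must propagate compactness of $T(t)$ to the fractional power space $X^\alpha$ while simultaneously taming the singular kernel $(t-s)^{-\alpha}$ near $s=t$, so the $T(\epsilon)$-factorization (for pointwise compactness) and the $\|[T(h)-I]A^\alpha T(\tau)\|$ estimate (for equicontinuity) are the technical heart of the extension; note also that these estimates genuinely require $\alpha<1$, the borderline $\alpha=1$ demanding separate treatment. A secondary point to handle with care is the time-nonlocality through $u^s(\cdot)$, ensuring that the associated superposition map is continuous into $C([\sigma,\sigma+T_1];X)$ so that both the continuity of $\mathcal F$ and the well-posedness of the Bochner integrals are justified.
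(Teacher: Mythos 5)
Your proposal is correct and follows the same overall architecture as the paper's proof: Schauder's fixed point theorem applied to the variation-of-constants map on a closed, bounded, convex subset of $C([\sigma,\sigma+T_1];X^\alpha)$, with the self-mapping property secured by the estimate $\|A^\alpha T(t)\|_{L(X)}\leqslant M_\alpha t^{-\alpha}e^{-\omega t}$ and relative compactness of the image obtained via Arzel\`a--Ascoli. The one genuine difference is the mechanism for pointwise precompactness of $\{(\mathcal F u)(t): u\in K\}$: you use the classical Pazy factorization $\int_\sigma^t = T(\epsilon)\int_\sigma^{t-\epsilon}+\int_{t-\epsilon}^t$, exploiting compactness of $T(\epsilon)$ on $X^\alpha$ together with smallness of the boundary layer, whereas the paper bounds $\|A^\beta (Fu)(t)\|_\alpha$ uniformly for a small $\beta>0$ with $\alpha+\beta\leqslant 1$ and invokes the compact embedding of $X^{\alpha+\beta}$ into $X^\alpha$. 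Both are valid; the paper's version yields the slightly stronger information that the image at each fixed $t>0$ is bounded in a strictly higher fractional power space, while yours is marginally more economical and stays closer to the $\alpha=0$ argument of Pazy. Your equicontinuity step, based on $\|(T(h)-I)A^\alpha T(\tau)\|_{L(X)}\leqslant C h^{\gamma}\tau^{-\alpha-\gamma}$, is essentially identical to the paper's. Two remarks. First, you are right that the argument genuinely requires $\alpha<1$: both the self-mapping bound $\int_\sigma^t (t-s)^{-\alpha}\,ds$ and the choice of $\gamma>0$ with $\alpha+\gamma\leqslant 1$ degenerate at $\alpha=1$, so the stated range $0\leqslant\alpha\leqslant 1$ should be read as $\alpha<1$; the paper's proof carries the same implicit restriction. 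Second, your explicit assumption that $f$ is bounded on bounded sets does not follow from continuity alone in infinite dimensions; the paper makes the same tacit assumption when it declares $N_a<\infty$, so this is an additional (mild, and satisfied by the concrete nonlinearity $g$ of \eqref{gdef}) hypothesis in both treatments rather than a defect of yours.
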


We note that, as a consequence of Theorem \ref{mildsolution}, if $u(\cdot,w_0)$ is defined in a finite time interval of existence $[0,\tau)$ and $\sup_{t\in [0,\tau)}\|u(t,w_0)\|_\alpha<\infty$ then $u(\cdot,w_0)$ can be extended to an interval $[0,\tau')$ with $\tau'>\tau$. We also prove that, under suitable conditions, strong solutions are classical.

\begin{theo}\label{strongsolution} Let $X$ be a Banach space, $A$ a sectorial operator such that $(\lambda + A)^{-1}$ is compact for all $\lambda \in \rho (-A)$, and $\{T(t): \,t\geqslant 0\}$ be an analytic semigroup generated by $-A$. Assume that $f: [\sigma,\sigma+T) \times X^\alpha \times  C([\sigma,\sigma+T], X^{\alpha}) \rightarrow X, \, 0\leqslant \alpha \leqslant 1$, is continuous and such that, given $u \in C([\sigma,\sigma+T], X^{\alpha}) $ 

\begin{equation}\label{hipsobref}
\|f(t,u^t(t),u^t(\cdot))-f(s,u^s(s),u^s(\cdot))\|_X \leqslant w(\|u(t)-u(s)\|_{\alpha})+w(|t-s|^\beta),\,\,0 <\beta<1-\alpha,
\end{equation}
where $w:[0,\infty) \rightarrow [0,\infty)$ is an increasing continuous function such that $w(0)=0$ and 
\begin{equation}\label{hipsobrew}
\displaystyle \int_{0}^{t}u^{-1}w(u^\beta)du < \infty .
\end{equation}

If $u:[\sigma,\sigma+t_1] \rightarrow X^{\alpha}$ is  a continuous function and
$$
u(t)=T(t-\sigma)w_0 + \displaystyle \int_{\sigma}^{t} T(t-s) f(s,u^s(s),u^s(\cdot))ds , \;\;\ \; \sigma \leqslant t \leqslant \sigma+t_1,
$$

then $u(t)$ is a strong solution of \eqref{general}.
\end{theo}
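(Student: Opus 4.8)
The plan is to show that the inhomogeneous term $g(s):=f(s,u^s(s),u^s(\cdot))$ is regular enough in $s$ for the convolution appearing in the variation of constants formula \eqref{solufraca} to be continuously differentiable and to take values in $D(A)$; the homogeneous part is handled directly by analyticity, since $T(t-\sigma)w_0\in D(A^k)$ for every $k$ and every $t>\sigma$, with $\frac{d}{dt}T(t-\sigma)w_0=-AT(t-\sigma)w_0$. Writing $u=T(\cdot-\sigma)w_0+v$ with $v(t)=\int_\sigma^t T(t-s)g(s)\,ds$, it therefore suffices to prove that $v$ is continuously differentiable on $(\sigma,\sigma+t_1)$, that $v(t)\in D(A)$ there, and that $v'(t)+Av(t)=g(t)$; adding the homogeneous part then gives that $u$ solves \eqref{general} classically with $u(\sigma)=w_0$.

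First I would establish the temporal regularity of the mild solution in the $X^\alpha$-norm. Since $u\in C([\sigma,\sigma+t_1],X^\alpha)$ and $f$, $a\circ l$, $h$ are continuous, $g$ is continuous, hence bounded, say $\|g(s)\|_X\le K$. Using the standard smoothing estimates for analytic semigroups and the splitting $v(t)-v(s)=\int_s^t T(t-r)g(r)\,dr+\int_\sigma^s[T(t-r)-T(s-r)]g(r)\,dr$, one obtains $\|v(t)-v(s)\|_\alpha\le C|t-s|^\theta$ for every $\theta<1-\alpha$; since $\beta<1-\alpha$ we take $\theta=\beta$. Combined with the local Lipschitz continuity of $t\mapsto T(t-\sigma)w_0$ in $X^\alpha$ on compact subintervals of $(\sigma,\sigma+t_1]$, this yields $\|u(t)-u(s)\|_\alpha\le C|t-s|^\beta$ on each such subinterval. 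Inserting this into hypothesis \eqref{hipsobref} produces the temporal modulus $\|g(t)-g(s)\|_X\le w(C|t-s|^\beta)+w(|t-s|^\beta)=:\tilde w(|t-s|)$, and a rescaling of the integration variable shows that $r\mapsto \tilde w(r)/r$ is integrable near $0$ by \eqref{hipsobrew}.

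Next I would prove that $v(t)\in D(A)$ for $t\in(\sigma,\sigma+t_1)$ by the classical add-and-subtract device. With the truncations $v_\varepsilon(t)=\int_\sigma^{t-\varepsilon}T(t-s)g(s)\,ds$, for which $Av_\varepsilon(t)=\int_\sigma^{t-\varepsilon}AT(t-s)g(s)\,ds$, rewrite $\int_\sigma^{t}AT(t-s)g(s)\,ds=\int_\sigma^{t}AT(t-s)[g(s)-g(t)]\,ds+\int_\sigma^{t}AT(t-s)g(t)\,ds$. The first integral converges absolutely because $\|AT(t-s)\|_{L(X)}\le C(t-s)^{-1}$ and $\|g(s)-g(t)\|_X\le\tilde w(t-s)$, so that $\int_\sigma^t (t-s)^{-1}\tilde w(t-s)\,ds<\infty$ by the Dini estimate just obtained; the second telescopes, since $\frac{d}{ds}T(t-s)=AT(t-s)$, to $g(t)-T(t-\sigma)g(t)$. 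Letting $\varepsilon\downarrow0$ and using that $A$ is closed gives $v(t)\in D(A)$ with $Av(t)=\int_\sigma^t AT(t-s)[g(s)-g(t)]\,ds+g(t)-T(t-\sigma)g(t)$, and shows that $t\mapsto Av(t)$ is continuous on $(\sigma,\sigma+t_1)$. Finally I would compute $v'$ from the difference quotient, splitting $v(t+h)-v(t)$ into $\int_t^{t+h}T(t+h-s)g(s)\,ds$, whose quotient tends to $g(t)$, and $\int_\sigma^t[T(t+h-s)-T(t-s)]g(s)\,ds$, whose quotient tends to $-Av(t)$ (the passage to the limit justified by the same Dini bound). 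This yields $v'(t)+Av(t)=g(t)$, and hence that $u$ is a strong solution.

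I expect the main obstacle to be the second step: extracting the precise $X^\alpha$-Hölder regularity of $u$ with the exponent $\beta$ dictated by the hypotheses, and then verifying that the induced temporal modulus $\tilde w$ of $g$ still satisfies the Dini integrability needed to make the singular integral $\int_\sigma^t AT(t-s)[g(s)-g(t)]\,ds$ converge. This is exactly where \eqref{hipsobref} and \eqref{hipsobrew} enter essentially and where the argument departs from the classical Hölder-continuous case of \cite[Theorem 3.3.3]{henry1981geometric}; the interplay among $\alpha$, $\beta$, the regularity exponent $\theta<1-\alpha$, and the constant appearing inside $w$ must be tracked carefully.
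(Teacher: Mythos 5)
Your proposal follows essentially the same route as the paper's proof: establish the $X^\alpha$-H\"older regularity of the mild solution away from the initial time (the paper records this as $\|u(t+h)-u(t)\|_{\alpha}\leqslant Ct^{-\beta}h^{\beta}$), transfer it through \eqref{hipsobref} into a Dini modulus for $s\mapsto f(s,u^s(s),u^s(\cdot))$, and then run the classical add-and-subtract argument on $\int AT(t-s)\left[g(s)-g(t)\right]ds$ using \eqref{hipsobrew}. The only cosmetic differences are that the paper gets $D(A)$-membership from the convergence of $h^{-1}(T(h)-I)G(t)$ via dominated convergence rather than from truncations plus closedness of $A$, and it makes explicit the splitting of the singular integral at an intermediate $\delta$ to handle the degeneration of the H\"older constant at the left endpoint --- a point your write-up glosses over but which is harmless since the kernel $(t-s)^{-1}$ is nonsingular there.
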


We also find suitable conditions to apply these results to our prototype model \eqref{problemau}.

Section \ref{comparison_results} is dedicated to the establishment of comparison results for solutions of problems like our prototype problem \eqref{problemau}. To that end we extend the results obtained in \cite{acrb-2000}. This is aimed towards obtaining global solutions without strong restrictions on the nonlinearities and also to obtain bounds on the attractors for the resulting multivalued flows.

\section{Existence of Solutions}\label{existence_of_solutions}
In this section, we shall establish the existence of a mild solution and strong solution for problem \eqref{general}. We will follow along the same lines as a similar result in \cite{pazy} proving the existence of a (possibly non-unique) mild solution. 

\begin{proof}[Proof of Theorem \ref{mildsolution}]
For simplicity of notation and without loss of generality, we will only consider the case when the initial time $\sigma=0$. For the general case, simply consider a new time variable $t'=t-\sigma$. We know $\{T(t): \,t\geqslant 0\}$ is a compact exponentially decaying analytic semigroup. Then (see \cite[Theorem 1.4.3]{henry1981geometric})
\begin{equation}\label{asg_est}
\| A^{\alpha} T(t) \| \leqslant M_\alpha t^{-\alpha}e^{-\omega t},\ \ \ t>0,
\end{equation}
and, for $0<\alpha\leq 1$ and $x\in X^\alpha$,
\begin{equation}\label{asg_holder}
\| T(t)x-x \| \leqslant \frac{M_{1-\alpha}}{\alpha} t^{\alpha}\|x\|_\alpha,\ \ \ t>0.
\end{equation}

For $w_0\in X^\alpha$,  $a\in (0,T]$ and $\rho>0$ define
$$
B_{\rho\!,a}(w_0):=\{v \in C([0,T],X^\alpha): \, \| v(t)-w_0\|_\alpha\leqslant \rho, 0 \leqslant t \leqslant a\}.
$$
and
$$ 
N_a=\sup\left\{\| f(t,v^t(t),v^t(\cdot) )\|_X: v \in B_{\rho,a}(w_0),\,\,0 \leqslant t \leqslant a\right\}.
$$  
Let $a>0$ be such that
$$
\sup\{\| T(t)w_0-w_0 \|_{\alpha}:t\in [0,a]\} \leqslant \rho/2,
$$
$$
N_aM_\alpha \displaystyle \int_0^a u^{-\alpha}e^{-\omega u}du \leqslant \rho/2
$$
and choose
$$
T_{1}=\min \{ a, \rho/2M_{\alpha}N_a \}.
$$
Set $Y= C([0,T_1];X^\alpha)$ and 
$
Y_{0}=\{ u: \,u \in B_{\rho,T_1}(w_0) , \, u(0
)=w_0\}.
$
Note that $Y_0$ is a bounded closed convex subset of $Y$. For $u\in Y$ we define 
$$
(Fu)(t)=T(t)w_0 \; + \; \int_{0}^{t}  T(t-s) f(s,u^s(s),u^s(\cdot)) ds, \ t\in [0,T_1].
$$
It is easy to see that $F$ maps $Y$ into itself and that $(Fu)(0)=w_0$. Since, for $u\in Y_0$,
\[
\begin{array}{rl}
\| (Fu)(t)-w_0 \|_{\alpha}& \leqslant  
\| T(t)w_0-w_0  \|_{\alpha} \; + \displaystyle \int_{0}^{t} \| T(t-s) f(s,u^s(s),u^s(\cdot))\|_{\alpha} ds.\\\\
& \leqslant \rho/2 \; + \displaystyle \int_{0}^{t} \| A^\alpha T(t-s)\|_{L(X)} \| f(s,u^s(s),u^s(\cdot))\|_{X} ds.\\\\
& \leqslant \rho/2 + N_aM_\alpha \displaystyle \int_0^t (t-s)^{-\alpha}e^{-\omega (t-s)}ds \; \leqslant \; \rho,\quad  t \in [0,T_1],
\end{array}
\]
we have that $F$ maps $Y_0$ into $Y_0$. Also, thanks to the continuity of $f$ it follows that the map $F$ is continuous. 

We will prove that $\overline{F(Y_0)}$ is compact in $Y_0$. To that end we first show that for every fixed $t, \; 0 \leqslant t \leqslant T_{1}$, the set $Z_0(t)= \{(Fu)(t), \, u \in Y_0 \}$ is precompact in $X^\alpha$. 

This is clear for $t=0$. Now, for $t>0$ and $\beta >0$ such that $0 \leqslant \alpha+ \beta \leqslant 1$ we have
\[
\begin{array}{rl}
\| A^{\beta}(Fu)(t) \|_{\alpha} & \leqslant \| A^{\beta}T(t)w_0 \|_{\alpha} + \displaystyle \int_{0}^{t} \| A^{\beta} T(t-s) f(s,u^s(s),u^s(\cdot))\|_{\alpha} ds  \\\\
& \leqslant M_{\beta} t^{ -\beta} e^{-\omega t}\|w_0\|_{\alpha} + N_aM_{\alpha+\beta} \displaystyle \int_0^t (t-s)^{-\alpha-\beta}e^{-\omega(t-s)}ds  < \infty
\end{array}
\]
since $A$ has compact resolvent, $X^{\alpha + \beta }$ is compactly embedded in $X^{\alpha}$  and it follows that $Z_0(t)$ is precompact in $X^\alpha$.\\

Now we are going to prove that
$
F(Y_0) = \tilde{Y} = \{ Fu : \; u \in Y_0 \}
$
is an equicontinuous family of functions. For $t_2>t_1 \geqslant 0$ we have 

\begin{equation*}
\begin{split}
\| (Fu)&(t_2) - (Fu)(t_1) \|_{\alpha}  \leqslant \| (T(t_2)-T(t_1)w_0)\|_{\alpha}\\
& + \| \displaystyle \int_0^{t_2} T(t_{2}-s)f(s,u^s(s),u^s(\cdot))ds- \int_0^{t_1}T(t_2-s)f(s,u^s(s),u^s(\cdot)) ds\|_{\alpha}\\
& + \|\int_0^{t_1}T(t_2-s)f(s,u^s(s),u^s(\cdot)) ds    - \displaystyle \int_0^{t_1} T(t_{1}-s)f(s,u^s(s),u^s(\cdot))ds  \|_{\alpha}\\
&  \leqslant \| (T(t_1)(T(t_2-t_1)-I)A^\alpha w_0\| \\
& +N_a \displaystyle \int_0^{t_1} \|T(t_{2}-s) -  T(t_{1}-s) \|_{L(X,X^\alpha)} ds  + \displaystyle N_a \int_{t_1}^{t_2}  \|T(t_{2}-s)\|_{L(X,X^\alpha)}  ds \\
&\leqslant  M_0e^{-\omega t_1}\|(T(t_2-t_1)-I)A^\alpha w_0)\|\\ 
& + N_a\!\! \displaystyle \int_0^{t_1} \!\!\!\|(T(t_2-t_1)\!-\!I) A^{\!-\beta}\|_{L(X)} \|A^{\beta+ \alpha }T( t_1-s )\|_{L(X)}ds 
 + N_aM_\alpha \!\!\displaystyle \int_{t_1}^{t_2}\!\! (t_2 - s) ^{-\alpha}\! e^{-\omega (t_{2}-s)}\!ds\\
&\leqslant M_0 e^{-\omega t_1}\| (T(t_2-t_1)-I)A^\alpha w_0\| 
 \\&+  \frac{N_aM_{1-\beta}M_{\alpha+\beta}}{\beta} (t_2 - t_1 )^{\beta}  \displaystyle \int_0^{t_1} (t_1 -s)^{-\alpha-\beta} e^{-\omega (t_1 - s)} ds 
 + N_aM_\alpha \displaystyle \int_{t_1}^{t_2} (t_2 - s) ^{-\alpha} e^{-\omega (t_{2}-s)}ds\\
 &\leqslant M_0 \| (T(t_2-t_1)-I)A^\alpha w_0\| 
 \\&+  \frac{N_aM_{1-\beta}M_{\alpha+\beta}}{\beta} (t_2 - t_1 )^{\beta}  \displaystyle \int_0^{t_1} s^{-\alpha-\beta} e^{-\omega s} ds 
 + N_aM_\alpha \displaystyle \int_{t_1}^{t_2} (t_2 - s) ^{-\alpha} e^{-\omega (t_{2}-s)}ds\\
  &\leqslant M_0\| (T(t_2-t_1)\!-\!I)A^\alpha w_0\| +  \frac{N_aM_{1-\beta}M_{\alpha+\beta}}{\beta} (t_2\! -\! t_1 )^{\beta} \frac{{t_1}^{1-\alpha-\beta}}{1-\alpha-\beta}
 +  \frac{N_aM_\alpha}{1-\alpha}(t_2-t_1)^{1-\alpha}
\end{split}
\end{equation*}
this last expression is independent of $u \in Y_0$ and tends to zero as $t_2 - t_1 \rightarrow 0$.
Note also that 
\[
\sup_{t \in [0,T_1], \; u \in Y_0} \|(Fu)(t)\|_\alpha = \sup_{t \in [0,T_1], \; u \in Y_0} \|A^\alpha(Fu)(t)\| < \infty.
\]
The desired precompactness of $\tilde{Y}$ is now a consequence of Arzela-Ascoli's theorem. It follows from  Shauder's fixed point theorem that $F$ has a fixed point in $Y_0$ which is a mild solution of \eqref{general}.
\end{proof}

If we assume further that $f$ satisfies \eqref{hipsobref}, we will be able to obtain a strong solution to the initial value problem \eqref{general}. In general, such regularity results require that $f$ be H\"{o}lder continuous in time and Lipschitz continuous in the phase space \cite{pazy,henry1981geometric}. We improve those results requiring a suitable logarithmic the modulus of continuity for the function $f$.

\begin{proof}[Proof of Theorem \ref{strongsolution}]
Let $0<\beta< 1-\alpha$. We will prove first that for some $C>0$
$$
\|f(t,u^t(t),u^t(\cdot))-f(s,u^s(s),u^s(\cdot))\|_X \leqslant w ( C(\min\{t,s\})^{-\beta}|t-s|^{\beta} ) + w(|t-s|^\beta).
$$

Since $f$ is continuous 
\begin{equation}\label{estima_f}
\sup_{0 \leqslant t \leqslant T_1} \| f(t,u^{t}(t),u^t(\cdot)) \|_X \leqslant B.
\end{equation}
If $0 \leqslant t \leqslant t+h \leqslant  T_1$, we have that 
\begin{equation}
\begin{split}
u(t+h)-u(t) = & ( T(h)-I ) \left[ T(t)w_0 + \displaystyle \int_{0}^{t} T(t-s) f(s,u^{s}(s),u^s(\cdot))ds \right]\\\\
  & + \displaystyle \int_{t}^{t+h} T(t+h-s) f(s,u^{s}(s),u^s(\cdot)) ds
\end{split}
  \end{equation}
it follows that, if $0 <  \alpha < 1 $ and $ 0 < h < 1 $,   
then 
\begin{equation}\label{estima_u}
\begin{split}
\|u(t+&h)  -u(t)\|_{\alpha} = \left\| (T(h)-I ) \left[T(t)w_0  +  \int_{0}^{t} T(t-s) f(s,u^{s}(s),u^s(\cdot))ds  \right]\right\|_{\alpha}\\\\
& + \left\| \displaystyle \int_{t}^{t+h} T(t+h-s) f(s,u^{s}(s),u^s(\cdot)) ds \right\|_{\alpha} \\\\
& =  \|( T(h)-I ) A^{\alpha} T(t)w_0 \|_X +  \displaystyle  \int_{0}^{t} \| ( T(h)-I ) A^{\alpha}T(t-s) f(s,u^{s}(s),u^s(\cdot))\|_{X} ds \\\\
& + \displaystyle \int_{t}^{t+h} \| A^{\alpha}T(t+h-s) f(s,u^{s}(s),u^s(\cdot)) \|_{X} ds \\\\
& = I_1 + I_2 + I_3. 
\end{split}
\end{equation}

We note that, for every $\beta$ satisfying $ 0 < \beta < 1- \alpha $ and every $ 0 < h < 1 $, we have from \eqref{asg_holder}

\begin{equation}\label{estimativafundamental}
\|( T(h)-I ) A^\alpha T(r) \|_{L(X)} \leqslant \frac{M_{1-\beta}}{\beta} h^{\beta} \|A^{\alpha + \beta} T(r) \|_{L(X)} \leqslant  \frac{M_{1-\beta}M_{\alpha+\beta}}{\beta} h^{\beta}r^{-(\alpha + \beta)}, \ r>0.
\end{equation}

Using \eqref{asg_holder}, \eqref{estima_f} and (\ref{estimativafundamental}) we estimate each of the terms of (\ref{estima_u}) separately.\\
\begin{equation*}
\begin{split}
&I_1  \!=\! \|( T(h)-I )T(t)w_0\|_{\alpha} \leqslant \frac{M_{1-\beta}M_{\beta}}{\beta}h^{\beta} t^{-\beta}\|w_0\|_{\alpha},\\
&I_2 \!=\!\! \displaystyle  \int_{0}^{t}\!\! \| ( T(h)-I ) A^{\alpha}T(t-s) f(s,u^{s}(s),u^s(\cdot))\| ds \!\leqslant \!\!\displaystyle  \int_{0}^{t}\!\! B\frac{M_{1-\beta}M_{\alpha+\beta}}{\beta}h^{\beta} (t-s)^{-(\alpha + \beta)}  ds,\\
&  I_3  \!= \!\! \int_{t}^{t+h}\!\!\!\!\! \|\!\!A^{\alpha}T(t+h-s) f(s,u^{s}(s),u^s(\cdot)) \| ds    \!\leqslant \!\!  \int_{t}^{t+h} \!\!\!\!\!\!\!BM_\alpha(t+h-s)^{-\alpha}ds \!\leqslant\! \frac{BM_\alpha}{1-\alpha} h^{1-\alpha} \!\leqslant\! \frac{BM_\alpha}{1-\alpha} h^\beta.\\
\end{split}
\end{equation*}

Combining (\ref{estima_u}) with these estimates it follows that there exists a constant $C>0$ such that 
$$
\|u(t+h)-u(t)\|_{\alpha} \leqslant Ct^{-\beta}h^{\beta} \;\;\; for \;0 < t \leqslant T_1-h,
$$
and therefore 
\begin{equation*}
\begin{split}
 \| f(t,u^t(t),u^t(\cdot))-f(s,u^s(s),u^s(\cdot)) \|_X  &\leqslant w ( \|u(t)-u(s)\|_{\alpha} ) +w(|t-s|^\beta)\\
 &\leqslant w ( C(\min\{t,s\})^{-\beta}|t-s|^{\beta} )+w(|t-s|^\beta), 
 \end{split}
\end{equation*}
for $0 <  t,s \leqslant T_1 $ and $C, \, \beta > 0, \, 0 < \beta < 1 - \alpha.$\\
To prove that $u:[0,T_1] \rightarrow X^{\alpha}$ is a strong solution it is enough to prove that
$$
G(t) = \displaystyle  \int_{0}^{t} e^{-A(t-s)}f(s,u^s(s),u^s(\cdot))ds
$$
takes values into $D(A)$ and $t \longmapsto -AG(t)$ is continuous on $(0,T_1]$. For this we are going to prove that $h^{-1}(T(h)-I)G(t)$ converges as $h \rightarrow 0^+$ uniformly for $t\in [t_0^* , T_1]$, for each $t_0^*\in (0,T_1]$.\\
Indeed, we have
\begin{equation}
\begin{array}{rl}\label{AG(t)}
h^{-1} ( T(h)-I )G(t) &  = \displaystyle  \int_{0}^{t} h^{-1} ( T(h)-I )T(t-s)(f(s,u^s(s),u^s(\cdot))-f(t,u^t(t),u^t(\cdot)))ds \\\\
& + h^{-1} \displaystyle  \int_{0}^{t} ( T(h)-I )T(t-s)f(t,u^t(t),u^t(\cdot)))ds.
\end{array}
\end{equation}

For simplicity of notation we will write $g(r)$ for $f(r,u^r(r),u^r(\cdot))$. To prove the convergence of the first integral in (\ref{AG(t)}) we are going to use Lebesgue's Dominated Convergence Theorem.
To that end, first, we note that, for $0<\delta<t,$
\begin{equation}\label{functionlookingfor}
\begin{split}
 \int_\delta^{t} \!\!\!\|AT(t-s)\|_{L(X)} \|g(t)\!-\!g(s)\|_X ds &  \!\leqslant\!\! M_1  \int_\delta^{t}\!\! (t-s)^{-1} \!(w ( C\delta^{-\beta}(t-s)^{\beta} )+ w ( (t-s)^\beta ))ds \\\\
& =M_1 \int_{0}^{\delta^{-1} C^{1/\beta}(t-\delta)} \dfrac{ w(u^\beta)}{u}du \!+\!  M_1 \! \int_{0}^{t} \!\!\dfrac{w(u^\beta)}{u}du  < \infty,
\end{split}
\end{equation}
and
\begin{equation*}
 \int_0^\delta \|AT(t-s)\|_{L(X)} \|g(t)-g(s)\|_X ds   \leqslant 2M_1  \int_0^\delta (t-s)^{-1} B ds=2M_1\ln\frac{t}{(t-\delta)}.
 \end{equation*}
Moreover, we have that
\[
\begin{array}{rl}
& \|h^{-1} \displaystyle  \int_{0}^{t} ( T(h)-I + hA )T(t-s)(g(s)-g(t))ds \|_X \\\\ 
& \leqslant \displaystyle  \int_{0}^{t} \| h^{-1}   \displaystyle  \int_{0}^{h} ( I-T(\sigma))d\sigma A T(t-s) (g(s)-g(t))\|_X ds .
\end{array}
\]
Since, for $ \delta\leqslant s\leqslant t$,
$$
 \| h^{-1}   \int_{0}^{h} ( I-T(\sigma))d\sigma A T(t-s) (g(s)-g(t))\|_X\leqslant
 2M_0M_1 (t-s)^{-1} \!(w ( C\delta^{-\beta}(t-s)^{\beta} )+ w ( (t-s)^\beta )),  
 $$
 for $0\leqslant s\leqslant \delta$,
 $$
 \| h^{-1}   \int_{0}^{h} ( I-T(\sigma))d\sigma A T(t-s) (g(s)-g(t))\|_X  \leqslant 2M_1  (t-s)^{-1} B,
 $$
 and for each $s\in (0,t]$
$$
h^{-1}   \displaystyle  \int_{0}^{h} ( I-T(\sigma))d\sigma A T(t-s) (g(s)-g(t)) \xrightarrow{h\rightarrow 0^+} 0,
$$
it follows from the Dominated Convergence Theorem that
$$
\|h^{-1} \displaystyle  \int_{0}^{t} ( T(h)-I + hA )T(t-s)(g(s)-g(t))ds \|_X \xrightarrow{h\rightarrow 0^+} 0 ,
$$
uniformly for $t\in [\delta, T_1]$, for any $\delta>0$.

To see that the second integral in (\ref{AG(t)}) converges, note that
\begin{equation*}
\begin{split}
h^{-1} \displaystyle  \int_{0}^{t} ( T(h)-I )T(t-s) &g(t)ds  =h^{-1} \displaystyle  \int_{0}^{h} ( T(h)-I )T(t-s)g(t)ds \\
&  +h^{-1} \displaystyle  \int_{h}^{t} ( T(h)-I )T(t-s)g(t)ds \\ & = h^{-1}\displaystyle  \int_{0}^{h} T(t+h-s) g(t)ds - h^{-1}\displaystyle  \int_{t-h}^{t}T(t-s)g(t)ds \\
& \xrightarrow{h \rightarrow 0^+}( T(t)-I)g(t),
\end{split}
\end{equation*}
with the convergence being uniform for $t\in [0,T_1]$. Therefore
$$
h^{-1} ( T(h)-I )G(t) \xrightarrow{h \rightarrow 0^+} - \displaystyle \int_{0}^{t}AT(t-s) [ g(s)-g(t)]ds + ( T(t)-I)g(t)
$$
uniformly on $[\delta,T_1]$. To prove that $AG(t)$ is continuous on $(0,T_1]$, we write 
\begin{equation}
\begin{array}{rl}
G(t) =  \displaystyle \int_{0}^{t}T(t-s)g(s) ds 
& = \displaystyle \int_{0}^{t}T(t-s)(g(s) - g(t) )ds + \displaystyle \int_{0}^{t}T(t-s) g(t)ds \\\\
& = G_1(t) + G_2(t).
\end{array}
\end{equation}
We have seen that $G_2(t) \in D(A)$ and that $AG_2(t) = (T(t)-I)g(t)$; since $f$ and $u$ are continuous, it is easy to see that $AG_2(t)$ is continuous on $[0,T_1]$.

To conclude the proof we have to show that $AG_1(t)$ is continuous on $(0,T_1]$. For $0<\delta<t$ we have
\begin{equation}
\label{AG_1}
AG_1(t)=\int_{0}^{\delta}AT(t-s)(g(s) - g(t) )ds + \displaystyle \int_{\delta}^{t}AT(t-s)(g(s) - g(t) )ds
\end{equation}
and, for $\beta>0$,
\begin{equation}
\begin{array}{rl}
&\| \displaystyle \int_{0}^{\delta} AT(t+h-s)(g(s)-g(t+h))ds - \displaystyle \int_{0}^{\delta} AT(t-s)(g(s)-g(t))ds \| \\\\
& = \| \displaystyle \int_{0}^{\delta}AT(t-s)g(t)ds - \displaystyle \int_{0}^{\delta} AT(t+h-s)g(t+h)ds + \displaystyle \int_{0}^{\delta}A(T(t+h-s)-T(t-s))g(s)ds\| \\\\
& \leqslant \| (T(t-\delta) -I)g
(t) - (T(t+h-\delta) -I)g(t+h)\| + B \displaystyle  \int_{0}^{\delta}\| A( T(t+h-s)-T(t-s))\|ds \\\\
& = \| (T(t-\delta) -I)g
(t) - (T(t+h-\delta) -I)g(t+h)\| + B \displaystyle  \int_{0}^{\delta}\|  (T(h)-I)A^{-\beta}A^{1+\beta}T(t-s))\|ds \\\\
& \leqslant \| (T(t-\delta) -I)g
(t) - (T(t+h-\delta) -I)g(t+h)\| + B\frac{M_{1-\beta}M_{1+\beta}}{\beta}h^{\beta} \displaystyle  \int_{0}^{\delta}  (t-s)^{-1-\beta}ds \\\\
& \xrightarrow{h\rightarrow 0^+} 0,
\end{array}
\end{equation}
uniformly for $ t\in [0,T_1]$ and, for $h<\delta$,

\begin{equation}
\begin{split}
\|  \int_{\delta}^{t+h} &AT(t+h-s)(g(s)-g(t+h))ds -  \int_{\delta}^{t} AT(t-s)(g(s)-g(t))ds \| \\
&  =  \|  \int_{\delta - h}^{t}       AT(t-s) (g(s+h) - g(t+h))ds -    \int_{\delta}^{t} AT(t-s)(g(s)-g(t))ds \|  \\
& \leqslant  \|   \int_{\delta}^{t}\!\!AT(t-s)(g(s+h) \!-\! g(t+h)\!-\!g(s)\!+\!g(t))ds \| \\
&+\int_{\delta - h}^{\delta} \!\!\!\! AT(t-s) (g(s+h)\!-\! g(t+h))ds\| .
\end{split}
\end{equation}
Now, since
\begin{equation}
\begin{split}
\displaystyle  \int_{\delta}^{t} \| AT(t-s) & (g(s+h) - g(t+h)-g(s)+g(t))\|ds \\
            & \leqslant 2\displaystyle  \int_{\delta}^{t}(t-s)^{-1}(w(C\delta^{-\beta}(t-s)^\beta)+ w((t-s)^\beta)ds < \infty,
\end{split}
\end{equation}
and, for each $s\in (0,t]$, we have 
$$
\| AT(t-s)(g(s+h) - g(t+h)-g(s)+g(t))\| \xrightarrow{h\rightarrow 0^+} 0,
$$
and it follows by Lebesgue's Dominated Convergence Theorem  that 
$$
\displaystyle  \int_{\delta}^{t} \| AT(t-s)(g(s+h) - g(t+h)-g(s)+g(t))\|ds \xrightarrow{h\rightarrow 0^+} 0,
$$
and since 
\begin{equation}
\begin{split}
\|\int_{\delta - h}^{\delta} AT(t-s) &(g(s+h)- g(t+h))ds\|\\
&\leqslant M_1  \int_{_{\delta - h}}^{_{\delta}}(t-s)^{-1}\! (w(C\delta^{-\beta}(t-s)^{\beta})\!+\! w((t-s)^\beta))ds  \\
            &\leqslant M_1    \int_{\delta^{-1} C^{1/\beta}(t-\delta)}^{\delta^{-1} C^{1/\beta}(t-\delta+h)}\dfrac{w(u^\beta)}{u} du  \xrightarrow{h\rightarrow 0^+} 0,
\end{split}
\end{equation}

we conclude that the second integral on the right hand side of (\ref{AG_1}) is a continuous function of $t$. Thus, $AG_1(t)$ is continuous and therefore $AG(t)$ is a continuous function on $(0,T_1]$.

\end{proof}

In \cite[Theorem 4.3.2]{pazy}, it was shown that the above result holds if $ A$ is the infinitesimal generator of an analytic semigroup, for functions $f$ that only depend on time and $f \in L^1(0,T;X)$ and such that, for  each $t\in (0,T)$ there is a $\delta_t$ and a continuous real value function $W_t:[0,\infty) \rightarrow [0,\infty)$  such that 
$$
\|f(t)-f(s)\| \leqslant W_t(|t-s|) 
$$
and
$$
\displaystyle \int_{0}^{\delta_t} \dfrac{W_t(\tau)}{\tau}d\tau < \infty.
$$
We extend this result to the case when $f$ also depends on state.

As an immediate consequence of the previous results we obtain that

 \begin{corol}
If $X, A$ and $\{T(t):t \geqslant 0\}$ are as in the Theorem \ref{mildsolution} and $f: X^{\alpha} \rightarrow X, \, 0\leqslant \alpha < 1,$ is a continuous map, the initial value problem
\begin{equation}\label{autonmous}
\left\{ \begin{array}{lcc}
\dfrac{ du(t)}{dt}+  Au(t)= f(u)  \,\,\,\,\,\,\,\,\,\, \thinspace t>0, \,\, \\
\\ u(0)=w_0 \in X^\alpha,
\end{array}
\right.
\end{equation}
possesses a mild solution for each $w_0 \in X^\alpha$. If $f$ is such that for each $u\in C([0,T],X^{\alpha})$
\begin{equation}\label{hipsobrefautonoma}
\|f(u(t))-f(u(s))\|_X \leqslant w(\|u(t)-u(s)\|_{\alpha}),
\end{equation}
where $w:[0,\infty) \rightarrow [0,\infty)$ is a continuous increasing function, $w(0)=0$ and
\begin{equation}\label{hipsobrewcasoautonomo}
\displaystyle \int_{0}^{t}u^{-1}w(u^\beta)du < \infty,
\end{equation}
  then \eqref{autonmous} has a strong solution for each $w_0\in X^\alpha$.
  \end{corol}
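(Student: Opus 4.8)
The plan is to recognize that the autonomous initial value problem \eqref{autonmous} is precisely the special case of the general problem \eqref{general} in which the nonlinearity depends neither explicitly on $t$ nor on the history $u^t(\cdot)$, but only on the current value $u(t)=u^t(t)$. Concretely, I would set $\tilde f:[0,T]\times X^\alpha\times C([0,T],X^\alpha)\to X$ by $\tilde f(t,\xi,\varphi):=f(\xi)$, so that along any curve $u(\cdot)$ one has $\tilde f(t,u^t(t),u^t(\cdot))=f(u(t))$. Under this identification (with $\sigma=0$), problem \eqref{autonmous} is an instance of \eqref{general}, and the variation-of-constants formula \eqref{solufraca} reduces exactly to the mild-solution formula for \eqref{autonmous}. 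The entire argument is then a verification that the hypotheses of Theorem \ref{mildsolution} and Theorem \ref{strongsolution} are inherited.

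For the existence of a mild solution I would only need the continuity of $\tilde f$ on the product space. But $\tilde f$ is the composition of $f$ with the continuous projection $(t,\xi,\varphi)\mapsto\xi$, so continuity of $\tilde f$ follows at once from the assumed continuity of $f:X^\alpha\to X$. Theorem \ref{mildsolution} then applies verbatim and yields, for each $w_0\in X^\alpha$, a mild solution $u\in C([0,T_1];X^\alpha)$ of \eqref{autonmous}.

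For the strong solution I would check that the autonomous hypotheses \eqref{hipsobrefautonoma}--\eqref{hipsobrewcasoautonomo} imply the hypotheses \eqref{hipsobref}--\eqref{hipsobrew} of Theorem \ref{strongsolution}. Fix any $\beta\in(0,1-\alpha)$, which is possible precisely because $\alpha<1$. Given $u\in C([0,T],X^\alpha)$, using $\tilde f(t,u^t(t),u^t(\cdot))=f(u(t))$ together with \eqref{hipsobrefautonoma},
\[
\|\tilde f(t,u^t(t),u^t(\cdot))-\tilde f(s,u^s(s),u^s(\cdot))\|_X=\|f(u(t))-f(u(s))\|_X\leqslant w(\|u(t)-u(s)\|_\alpha).
\]
Since $w$ takes values in $[0,\infty)$, the nonnegative term $w(|t-s|^\beta)$ may be added harmlessly to the right-hand side, giving exactly the estimate \eqref{hipsobref}; moreover the integrability requirement \eqref{hipsobrew} coincides with \eqref{hipsobrewcasoautonomo}. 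Thus the continuous function $u$ produced above satisfies the integral equation and the hypotheses of Theorem \ref{strongsolution}, and is therefore a strong solution of \eqref{autonmous}.

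There is no genuine obstacle in this proof; the only point deserving care is the bookkeeping that matches the autonomous data to the abstract framework. Specifically, one must note that the one-term modulus estimate \eqref{hipsobrefautonoma} upgrades to the two-term estimate \eqref{hipsobref} (using $w\geqslant 0$ and the availability of some $\beta\in(0,1-\alpha)$, guaranteed by $\alpha<1$), and that continuity of $f$ on $X^\alpha$ alone suffices for continuity of $\tilde f$ on the product space; both claims are immediate, which is why the corollary follows directly from the two preceding theorems.
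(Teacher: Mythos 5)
Your proof is correct and follows the same route the paper intends: the paper offers no separate argument, presenting the corollary as an immediate consequence of Theorems \ref{mildsolution} and \ref{strongsolution}, and your instantiation $\tilde f(t,\xi,\varphi):=f(\xi)$ together with the observation that the one-term modulus bound \eqref{hipsobrefautonoma} implies the two-term bound \eqref{hipsobref} (since $w\geqslant 0$ and some $\beta\in(0,1-\alpha)$ exists because $\alpha<1$) is exactly the needed bookkeeping.
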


\begin{obs}
The condition imposed on the function $w$ is strictly weaker than holder continuity. In fact, a possible choice of $w$ is $w(x)=( \ln(|x|^{-1}))^{-p}$, $x>0$, $p>1$. In that case $w:(0,e^{-1})\to \R$ is strictly increasing, 
$$
 \int_{0}^{e^{-1}} \frac{1 }{x(\ln(x^{-1})^{p}}dx =\int _1^\infty u^{-p} du <\infty
 $$
 and, for $r>0$,
$$
\lim_{x \rightarrow 0} \frac{x^r}{w(x)}=\lim_{x \rightarrow 0 } \dfrac{x^r}{( \ln(|x|^{-1}))^{-p}}  = \lim_{x \rightarrow 0 }( x^{r/p} (-\ln|x|) )^{p}=0.
$$

\end{obs}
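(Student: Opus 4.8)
The plan is to verify, for $w(x)=(\ln(|x|^{-1}))^{-p}$ with $p>1$, the three assertions of the remark: that $w$ is, on a one-sided neighbourhood of the origin, a legitimate modulus of continuity (continuous, strictly increasing, vanishing at $0$); that the associated logarithmic integral converges; and that $w$ decays to $0$ more slowly than every power $x^r$, so that the hypothesis \eqref{hipsobrew} is \emph{strictly weaker} than Hölder continuity. Since only the behaviour of $w$ near $0$ enters the estimates of Theorem \ref{strongsolution}, I would restrict attention to $x\in(0,e^{-1})$ (extending $w$ by the constant $w(e^{-1})$ on $[e^{-1},\infty)$ if a modulus defined on all of $[0,\infty)$ is wanted, which preserves continuity and monotonicity without affecting any of the integrals).

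First I would establish monotonicity and the boundary value. On $(0,e^{-1})$ one has $-\ln x>1>0$, so $w(x)=(-\ln x)^{-p}$ is well defined and positive. Writing $w=g\circ\varphi$ with $\varphi(x)=-\ln x$ and $g(t)=t^{-p}$, and observing that $\varphi$ is strictly decreasing on $(0,e^{-1})$ while $g$ is strictly decreasing on $(1,\infty)$, the composition is strictly increasing there. Moreover $\varphi(x)\to\infty$ as $x\to0^+$ gives $w(0^+)=0$, so $w$ extends continuously with $w(0)=0$ and meets all qualitative requirements on the modulus.

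Next I would evaluate the integral by the substitution $u=\ln(x^{-1})=-\ln x$, i.e. $x=e^{-u}$, under which $\dfrac{dx}{x}=-\,du$ and the limits $x=e^{-1}$, $x\to0^+$ become $u=1$, $u\to\infty$. This turns $\displaystyle\int_0^{e^{-1}}\dfrac{dx}{x(\ln(x^{-1}))^p}$ into $\displaystyle\int_1^\infty u^{-p}\,du=\dfrac{1}{p-1}$, finite precisely because $p>1$. This is exactly the integrability condition \eqref{hipsobrew} for the present $w$: replacing the argument $x$ by $u^\beta$ only multiplies $w$ by the harmless constant $\beta^{-p}$, since $w(u^\beta)=\beta^{-p}(\ln(u^{-1}))^{-p}$.

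Finally, for the comparison with Hölder moduli, fix $r>0$ and note that $\dfrac{x^r}{w(x)}=x^r(-\ln x)^p=\bigl(x^{r/p}(-\ln x)\bigr)^p$. Putting $x=e^{-s}$ reduces the inner factor to $s\,e^{-rs/p}$, which tends to $0$ as $s\to\infty$ because the exponential dominates any polynomial; raising to the $p$-th power preserves the limit, so $\lim_{x\to0^+}x^r/w(x)=0$ for every $r>0$. Hence $w$ cannot be bounded below by any $c\,x^r$ near the origin, which is precisely the sense in which \eqref{hipsobref}--\eqref{hipsobrew} genuinely enlarges the Hölder class. None of these steps is a serious obstacle; the only points requiring care are the \emph{direction} of monotonicity (handled by composing the two decreasing maps $\varphi$ and $g$) and the justification of the indeterminate $0\cdot\infty$ limit, which the substitution $x=e^{-s}$ makes transparent.
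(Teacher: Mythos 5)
Your proposal is correct and follows essentially the same route as the paper: the substitution $u=\ln(x^{-1})$ reducing the integral to $\int_1^\infty u^{-p}\,du$, and the rewriting $x^r/w(x)=\bigl(x^{r/p}(-\ln x)\bigr)^p$ for the limit, are exactly the computations displayed in the remark. Your additional touches --- extending $w$ by the constant $w(e^{-1})$ to get a modulus on all of $[0,\infty)$, and noting that $w(u^\beta)=\beta^{-p}w(u)$ so the scaling by $\beta$ is harmless in the hypothesis \eqref{hipsobrew} --- are sound refinements of details the paper leaves implicit.
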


 \begin{obs}\label{mildandstrongsolutionexistence}     

 Consider the initial boundary value problem 
 \begin{equation}\label{problematimereparametrizado}
\left\{ \begin{array}{lcc}
\dfrac{\partial v}{\partial t } = \dfrac{\partial v^2}{\partial x^2}+  \frac{1}{a(l(v))}\left(\lambda f(v) + h(\sigma+\int_\sigma^t a(l(v(\theta)))d\theta)\right), \,\,\,\,\,\,\,\,\,\, \thinspace t  >\sigma, \,\, x \in \Omega,  \\
\\ v(t,\sigma,0)=v(t,\sigma,1)=0,\\
\\ v(\sigma,\sigma,x)=w_0(x).
\end{array}
\right.
\end{equation}

If $ f,h$ and $a\circ l$ are continuous functions we obtain the existence of mild solutions for \eqref{problematimereparametrizado} by Theorem \ref{mildsolution}. To obtain a regular solution we use  Theorem \ref{strongsolution}: assume that $a,h \in C(\R^+), \, a(s)\geqslant m > 0$, $l:H^1_0(\Omega) \rightarrow \R^+$ is a continuous functional and $f:H^1_0(\Omega) \rightarrow L^2(\Omega),$ a continuous function with the property that, for each $w_0\in X^\alpha$   there is a neighborhood $V_{w_0}$ of $w_0$ in $X^\alpha$ such that
$$
\| f(u_1)-f(u_2) \| \leqslant w(\| u_1-u_2\|_\alpha) , \,\,
\left\| a(l(u_1))-a(l(u_2)) \right\| \leqslant w(\| u_1-u_2\|_\alpha), \ \hbox{ for all } u_1,u_2\in V_{w_0},
$$
and 
$\|h(t)-h(s)\| \leqslant w(|t-s|^\beta),
$ where $w:[0,\infty) \rightarrow [0,\infty)$ is increasing, continuous, $w(0)=0$ and

$$
 \displaystyle \int_{0}^{t}u^{-1}w(u^{\beta})du < \infty ,\hbox{ for some }\beta \in(0,1- \alpha).
$$ 
Therefore, problem \eqref{problematimereparametrizado} has a strong solution which turns out to be (by simple bootstrapping arguments) classical.\\
Furthermore, as discussed in Section $2,$ making the reparameterization  $t = \sigma+\int_s^\tau a(l(v(\theta)))^{-1}d\theta$ we have that $w(\tau,x)=v(t,x)$ is solution of $\eqref{problema}.$ That is, we have proved the existence and regularity for a solution of the problem \eqref{problema} thanks to the results obtained for \eqref{problematimereparametrizado}.
 \end{obs}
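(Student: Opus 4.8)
The plan is to read \eqref{problematimereparametrizado} as the concrete realization of the abstract problem \eqref{general} and then verify, one hypothesis at a time, the assumptions of Theorems \ref{mildsolution} and \ref{strongsolution}. I would take $X=L^2(\Omega)$ with $\Omega=(0,1)$, let $A=-\partial_{xx}$ on $D(A)=H^2(\Omega)\cap H^1_0(\Omega)$, and fix $\alpha=\tfrac12$, so that $X^{1/2}=H^1_0(\Omega)$ is exactly the phase space in which the datum $w_0$ lives and into which $f$ and $a\circ l$ are assumed continuous. Since $A$ is self-adjoint, positive and has discrete spectrum, it is sectorial, $(\lambda+A)^{-1}$ is compact, and $-A$ generates a compact, exponentially decaying analytic semigroup; this is precisely the standing hypothesis on $A$ in both theorems. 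The nonlinearity is the map $g$ of \eqref{gdef}, so the whole argument reduces to continuity and modulus-of-continuity properties of $g$.

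For the mild solution I would check that $g:[\sigma,\sigma+T]\times X^{1/2}\times C([\sigma,\sigma+T],X^{1/2})\to X$ is continuous, which is the sole hypothesis of Theorem \ref{mildsolution}. The reaction part $\lambda f(v(t))/a(l(v(t)))$ depends continuously on $v(t)\in X^{1/2}$ because $f$, $l$ and $a$ are continuous and the bound $a\ge m>0$ keeps the denominator away from zero; the forcing part $h\big(\sigma+\int_\sigma^t a(l(v(r)))\,dr\big)/a(l(v(t)))$ is continuous because $(t,v(\cdot))\mapsto \int_\sigma^t a(l(v(r)))\,dr$ is a continuous real functional and $h$ is continuous. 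With $g$ continuous, Theorem \ref{mildsolution} yields a mild solution $v\in C([\sigma,\sigma+T_1],H^1_0(\Omega))$.

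The substantive step is upgrading $v$ to a strong solution by verifying \eqref{hipsobref} along this fixed $v$. I would split, with $P=\lambda f(v)+h(\cdot)$ and $Q=a(l(v))$,
\[
\frac{P(t)}{Q(t)}-\frac{P(s)}{Q(s)}=\frac{P(t)-P(s)}{Q(t)}+P(s)\Big(\frac{1}{Q(t)}-\frac{1}{Q(s)}\Big).
\]
Because $v$ is continuous on a compact interval its range is compact in $H^1_0$, hence $a(l(v(\cdot)))$ is bounded above by some $M_v$ and, with $a\ge m$, one has $1/Q\le 1/m$ and $|1/Q(t)-1/Q(s)|\le m^{-2}|a(l(v(t)))-a(l(v(s)))|$. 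The modulus hypotheses \eqref{modulusofcont} then bound $\|f(v(t))-f(v(s))\|$ and $|a(l(v(t)))-a(l(v(s)))|$ by $w(\|v(t)-v(s)\|_{1/2})$, while $\|h(\tau(t))-h(\tau(s))\|\le w(|\tau(t)-\tau(s)|^{\beta})\le w\big((M_v|t-s|)^{\beta}\big)$ since $|\tau(t)-\tau(s)|\le M_v|t-s|$. Using also that $f(v(\cdot))$ and $h(\cdot)$ are bounded on the compact interval, I collect the terms into
\[
\|g(t,v^t(t),v^t(\cdot))-g(s,v^s(s),v^s(\cdot))\|_X\le C_1\,w(\|v(t)-v(s)\|_{1/2})+C_2\,w\big(M_v^{\beta}|t-s|^{\beta}\big),
\]
which is of the form \eqref{hipsobref} for the new modulus $\tilde w(r):=C_1 w(r)+C_2 w(M_v^{\beta} r)$.

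The delicate point, and the one I expect to be the main obstacle, is that $\tilde w$ must still satisfy the integrability \eqref{hipsobrew}. This holds because the condition is scale invariant: the substitution $u\mapsto M_v^{-1/\beta}u$ gives $\int_0^t u^{-1}w(M_v^{\beta}u^{\beta})\,du=\int_0^{M_v^{1/\beta}t}u^{-1}w(u^{\beta})\,du<\infty$, and a sum of such integrable moduli is integrable. I would also note that \eqref{modulusofcont} is assumed only on a neighborhood $V_{w_0}$; since $v$ is continuous with $v(\sigma)=w_0$, the trajectory stays in $V_{w_0}$ for short time, so the estimate holds on a possibly smaller interval, which is exactly the local setting of Theorem \ref{strongsolution}. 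That theorem then produces a strong solution, classical after a routine Schauder bootstrap, and finally the time change $t=\sigma+\int_s^\tau a(l(v(\theta)))^{-1}\,d\theta$ of Section \ref{setting_of_the_problem} transfers $v$ to $w(\tau,x)=v(t,x)$, a solution of \eqref{problema}. The whole delicacy is concentrated in keeping the quotient structure under control through the lower bound $a\ge m$ and in the purely $v$-dependent upper bound $M_v$, which exists for free by compactness of the trajectory, so that no global upper bound on $a$ is ever needed.
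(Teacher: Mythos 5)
Your proposal is correct and follows exactly the route the paper intends: the remark is stated as a direct application of Theorems \ref{mildsolution} and \ref{strongsolution} with $X=L^2(\Omega)$, $A=-\partial_{xx}$ and $\alpha=\tfrac12$, and your verification that $g$ in \eqref{gdef} is continuous and satisfies \eqref{hipsobref} is the computation the paper leaves implicit. The two details you supply beyond what is written --- the quotient decomposition controlled by $a\geqslant m$ together with the trajectory bound $M_v$, and the scale invariance of the integrability condition \eqref{hipsobrew} under $w\mapsto w(M_v^{\beta}\,\cdot\,)$ --- are exactly the right points to check and are handled correctly.
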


\section{Comparison results, bounds and global solutions.}\label{comparison_results}

In this section, we restrict our attention to spaces with a partial ordering. Typically these spaces are linear subspaces of the space of functions defined in a domain $\Omega\subset \R^N$ and taking value in $\R$ which are locally integrable ($L^1_{\rm loc}(\Omega)$)  like $X=L^2(\Omega)$ or $X^{1/2}= 
H^{1}_{0}(\Omega)$. In this situation the partial ordering is defined 
$$
u \leqslant v  \Leftrightarrow u(x) \leqslant v(x) ~ a.e. ~ for ~ x ~ in ~ \Omega.
$$
We aim to prove comparison results for our prototype model \eqref{problemau} and similar models without uniqueness. Our approach is abstract, based on \cite{acrb-2000} (see also \cite{carvalho2012attractors}) and extended to include semilinear problems without uniqueness.

As before we assume that $-A$ has compact resolvent, is the infinitesimal generator of an  analytic semigroup $\{e^{-At}:t\geqslant 0\}$ and that $\alpha\in (0,1)$. 

We also
assume that $X$ has a partial ordering $\leqslant$  and that the positive cone in $X^\alpha$, that is,
$$
C_\alpha=\{x\in X^\alpha: 0\leqslant x\}, \ \alpha \geqslant 0
$$
is closed. We also require that $-A$ has positive resolvent, that is,  $(-A)^{-1} C_\alpha \subset C_\alpha$.

We prove comparison results for the initial value problems of the form
\begin{equation}\label{g+-}
\left\{ \begin{array}{lcc}
\dot{u}= Au + g^{\pm}(u) , \,\,\,\,\,\,\,\,\,\, \thinspace t>0,
\\ u(0)=w_0\in X^\alpha
\end{array}
\right.
\end{equation}
where $g^{\pm} : X^\alpha  \rightarrow X$ are continuous maps. 

Assuming that $g^{-}\leqslant g^{+}$  and that, for some choice of $k\in \R$, $g^{+}(u) + k u$ is an increasing function, we will prove that for each solution $u^{\pm}(t,w_0)$ exist $u^{\mp}(t,w_0)$ such that $u^{-}(t,w_0)\leqslant u^{+}(t,w_0)$.

 By using the comparison results in this section, we will be able to ensure that the solutions of \eqref{problematimereparametrizado} are globally defined, therefore those of \eqref{problema} are also globally defined.

\begin{theo}\label{comparison}
 Let $g^{\pm}:X^\alpha \rightarrow X$ be continuous maps such that  $ g^- \leqslant g^+$ and $g^{+}(u) + k u$ is an increasing function. Then

\begin{itemize}
    \item [(i)] For each solution $u(t,w_0,g^+)$ exist $u(t,w_0,g^-)$ such that $u(t,w_0,g^-)\leqslant u(t,w_0,g^+)$ as long as both are defined.

\item[(ii)] Given a solution $u(t,u^+\!,g^+\!)$ and $u^-\in X^\alpha$ with $u^-\! \leqslant u^+\!$, there exists a solution $u(t,u^-\!,g^+\!)$ such that $u(t,u^-\!,g^+\!)\leqslant u(t,u^+\!,g^+\!)$ as long as both are defined. 

\end{itemize}
An analogous result holds if $g^{-}+kI$ is an increasing function.
\end{theo}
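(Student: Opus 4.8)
Throughout write $T(t)$ for the order‑preserving, compact, analytic semigroup from \eqref{solufraca}; the standing assumption that $-A$ has positive resolvent guarantees that $(\lambda I+A)^{-1}$ is positive for all large $\lambda$, whence the exponential formula $T(t)=\lim_{n\to\infty}(I+\tfrac tn A)^{-n}$ (a limit of products of positive operators, the cone being closed) shows $T(t)C_\alpha\subset C_\alpha$. The plan is first to absorb $k$ into the linear part: put $G^{\pm}(u)=g^{\pm}(u)+ku$ and $S(t)=e^{-kt}T(t)$, which is again order preserving since $e^{-kt}>0$. In these terms a mild solution of \eqref{g+-} satisfies $u(t)=S(t)w_0+\int_0^t S(t-s)G^{\pm}(u(s))\,ds$, and by hypothesis $G^{+}$ is increasing while $G^{+}-G^{-}=g^{+}-g^{-}\geqslant 0$.

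Because uniqueness is unavailable, I would not run a monotone iteration (whose decreasing limit is delicate to control from below) but produce the comparison solution directly by a Schauder argument confined by the given barrier $U^{+}$. Let $\phi$ be the target datum ($\phi=w_0$ in (i), $\phi=u^{-}$ in (ii)) and $\phi^{+}=U^{+}(0)$, so $\phi\leqslant\phi^{+}$ with equality in (i). On a short interval $[0,\tau]$ set
\[
K=\Big\{v\in C([0,\tau],X^\alpha):\ v(0)=\phi,\ v(t)\leqslant U^{+}(t)\ \forall t,\ \|v(t)-\phi\|_\alpha\leqslant\rho\Big\},
\qquad
(\Phi v)(t)=S(t)\phi+\int_0^t S(t-s)\,G(v(s))\,ds,
\]
with $G=G^{-}$ in case (i) and $G=G^{+}$ in case (ii). The set $K$ is closed (the cone is closed), convex (an order interval below a fixed function, intersected with a ball), bounded, and nonempty (it contains $\Phi U^{+}$ for $\tau$ small). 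That $\Phi$ maps the ball constraint into itself and that $\overline{\Phi(K)}$ is compact follow verbatim from the estimates \eqref{asg_est}--\eqref{asg_holder} and the Arzel\`a--Ascoli/compact‑embedding argument already used in the proof of Theorem \ref{mildsolution}; continuity of $\Phi$ comes from continuity of $G$.

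The decisive point is that $\Phi$ respects the barrier, i.e. $v\leqslant U^{+}\Rightarrow\Phi v\leqslant U^{+}$. Using $U^{+}(t)=S(t)\phi^{+}+\int_0^t S(t-s)G^{+}(U^{+}(s))\,ds$ one finds
\[
U^{+}(t)-(\Phi v)(t)=S(t)(\phi^{+}-\phi)+\int_0^t S(t-s)\big[G^{+}(U^{+}(s))-G(v(s))\big]\,ds,
\]
and in either case the bracket is nonnegative after the splitting
\[
G^{+}(U^{+})-G(v)=\underbrace{\big[G^{+}(U^{+})-G^{+}(v)\big]}_{\geqslant 0\ (G^{+}\ \mathrm{increasing},\ v\leqslant U^{+})}+\underbrace{\big[G^{+}(v)-G(v)\big]}_{\geqslant 0},
\]
the last term being $0$ in case (ii) and $g^{+}(v)-g^{-}(v)\geqslant 0$ in case (i); the leading term $S(t)(\phi^{+}-\phi)$ is nonnegative because $\phi\leqslant\phi^{+}$ and $S(t)$ preserves the cone. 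Note that only the monotonicity of $G^{+}$ is ever used. Since $S(t-s)C_\alpha\subset C_\alpha$, the integrand is nonnegative, so $\Phi v\leqslant U^{+}$ and $\Phi(K)\subset K$. Schauder's theorem then yields a fixed point of $\Phi$ in $K$: a mild solution of \eqref{g+-} (with $g^{-}$ in (i); with $g^{+}$ and datum $u^{-}$ in (ii)) lying below $U^{+}$ on $[0,\tau]$.

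The remaining, and I expect principal, difficulty is globalization, since Schauder only gives the inequality on a short interval. I would iterate, restarting at $t=\tau$ from the datum $U^{-}(\tau)\leqslant U^{+}(\tau)$, and invoke the uniform lower bound on the existence time of Theorem \ref{mildsolution} (valid on bounded subsets of $X^\alpha$, along which $U^{+}$ stays bounded on compact subintervals) to see that the steps do not shrink to zero; hence the barrier inequality propagates on every compact subinterval on which $U^{+}$ exists, which is precisely what ``as long as both are defined'' means. The analogue stated when $g^{-}+kI$ is increasing is obtained by the symmetric construction, building the order‑preserving operator around the lower object and comparing the barrier from below, following the same lines as \cite{acrb-2000}.
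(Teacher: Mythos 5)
Your proof is correct, and for part (i) it is essentially the paper's argument: both build the closed convex set of candidate functions that are constrained to lie below the given solution $u(\cdot,w_0,g^+)$, verify that the variation-of-constants map with forcing $g^-+kI$ preserves that constraint via the splitting $\bigl[g^+(u^+)+ku^+\bigr]-\bigl[g^-(v)+kv\bigr]=\bigl[(g^++kI)(u^+)-(g^++kI)(v)\bigr]+\bigl[g^+(v)-g^-(v)\bigr]\geqslant 0$, and conclude by Schauder. For part (ii) you genuinely diverge: the paper runs a monotone iteration $u_{n+1}=F_{u^-}(u_n)$ seeded with $u(\cdot,u^+,g^+)$, shows $u_n\leqslant u(\cdot,u^+,g^+)$ inductively, and passes to the limit along a convergent subsequence of the precompact (and in fact decreasing) sequence; you instead reuse the same barrier-constrained Schauder argument with $G=G^++kI$ and datum $u^-$, handling the shifted initial data through the extra nonnegative term $S(t)(\phi^+-\phi)$. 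Your route is more uniform (one mechanism for both parts, and it is exactly the general form needed to restart the comparison at $t=\tau$ from ordered but unequal data, a point the paper's ``successively repeated'' continuation quietly relies on) and it sidesteps the identification of the iteration's subsequential limit as a fixed point, which in the paper needs continuity of $F_{u^-}$ plus the monotonicity of the sequence to align indices. What the paper's iteration buys in exchange is an explicit approximating scheme. Two further remarks: your justification that $T(t)$ preserves the cone (via positivity of the resolvent and the exponential formula) addresses a step the paper uses without comment, and your globalization discussion is at least as careful as the paper's one-line continuation claim; in both treatments the extension argument ultimately rests on the uniform local existence time from Theorem \ref{mildsolution} and on the convention that the inequality is only asserted on the common interval of existence.
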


\begin{proof}

\begin{itemize}
    \item[(i)] We write the initial value problem \eqref{g+-} (with forcing term $g^+$) as
\begin{equation}\label{traslacionidentidad}
\left\{ \begin{array}{lcc}
\dfrac{ du(t)}{dt}= -(A+kI)u + g^{+}(u) + ku , \,\,\,\,\,\,\,\,\,\, \thinspace t>0, \,\, \\
\\ u(0)=w_0 \in X^\alpha.
\end{array}
\right.
\end{equation}

Note that $k\in \R$ may be chosen such that $(g^++kI)(\cdot)$ is increasing and $A+kI$ is the generator of an exponentially decaying semigroup. Since $g^{+}$ is continuous, follows that $g^{+}(u) + k u$ is an continuous function, it follows from Theorem \ref{mildsolution} that the initial value problem \eqref{traslacionidentidad} has a mild solution $u^{+}(\cdot)$.

Let $T(t)$ be the semigroup generated by $-A-kI $, then \eqref{asg_est} and \eqref{asg_holder} hold.

$$
\| A^{\alpha} T(t) \| \leqslant M_\alpha t^{-\alpha}e^{-\omega t}\,\,\, for \,\,t>0.
$$

Fix a mild solution  $u^+(\cdot) \in C([0,T_1],X^\alpha)$ of  \eqref{traslacionidentidad} and choose $T_1\in (0,t_1]$ and $\rho>0$ such that
$$
\| T(t)w_0-w_0 \|_{\alpha} \leqslant \rho/2 \,\,\, for \,\, 0 \leqslant t \leqslant T_1,
$$ 
$$
\| g^{-}(v) + kv\|_{X}\leqslant N_{T_1}  \,\,\,\, for \,\,all\,\, v \in  B_\rho^{X^\alpha}(w_0)=\{v_0: \, \| v_0-w_0\|_{X^\alpha}  \leqslant \rho \}, 
$$
$$
N_{T_1}M_\alpha \displaystyle \int_0^t u^{-\alpha}e^{-\omega u}du \leqslant \rho/2 \,\,\, for \,\, 0 \leqslant t \leqslant T_1 \hbox{ and } M_\alpha N_{T_1} \rho/2 \geq T_1.
$$

Set $Y= C([0,T_1];X^\alpha)$ and 
$Y_{0}=\{ u^{-}: u^{-} \in Y, u(0)=w_0, u^{-}(t) \in B_\rho^{X^\alpha}(w_0), u^-(t) \leqslant u^+(t),~ 0 \leqslant t \leqslant T_{1}\}.
$\
We define a mapping $F_{w_0}:Y_0 \rightarrow Y$ by 
$$
(F_{w_0}u^{-})(t)= T(t)w_0 \; + \; \int_{0}^{t}  T(t-s) g^{-}(u^{-}(s)) + ku^{-}(s) )ds.
$$
Note that, for all $u^{-}\in Y_0$,
\[
\begin{array}{rl}
\| (F_{w_0}u^{-})(t)-w_0 \|_{\alpha}& \leqslant  
\| T(t)w_0-w_0  \|_{\alpha} \; + \displaystyle \int_{0}^{t} \| T(t-s) (g^{-}(u^{-}(s))+ ku^{-}(s) )\|_{\alpha} ds.\\\\
& \leqslant  \rho,
\end{array}
\]
and since  $g^{+}(u) + k u$ is an increasing function we have 
\[
\begin{array}{rl}
(F_{w_0}u^{-})(t)-u^{+}(t)= & \!\! \!\!\!\displaystyle  \int_{0}^{t}\!\!  T(t-s) (g^{-}(u^{-}(s))\!+\! ku^{-}(s))ds
-  \!\!\displaystyle  \int_{0}^{t} \!\! T(t-s) (g^{+}(u^{+}(s))\!+\!k u^{+}(s))ds \\\\
\leqslant &  \displaystyle  \int_{0}^{t}  T(t-s)( g^{+}(u^{-}(s))+ ku^{-}(s)- g^{+}(u^{+}(s)) -k u^{+}(s))ds\\\\
\leqslant & \displaystyle  \int_{0}^{t}  T(t-s)( g^{+}(u^{+}(s))+k u^{+}(s)- g^{+}(u^{+}(s))-k u^{+}(s))ds =0.
\end{array}
\]
It follows that $F_{w_0}$ maps $Y_0$ into $Y_0$. Also, the continuity of $g^-(u^-)+ku^-$ implies that $F_{w_0}$ is continuous. Moreover, as before, we conclude that  $\overline{F_{w_0}(Y_0)}$ is compact into $Y_0$.

Schauder's fixed point Theorem implies that $F$ has a fixed point in $Y_0$ and it is a mild solution of \eqref{g+-} (with forcing term $g^-$) on $C([0,T_1],H^{1}_{0}(\Omega))$. This procedure successively repeated at the end time of the interval proves the result.

\item[(ii)] Fix $u^+\in X^\alpha$ and write equation \eqref{g+-} as
			$$
			u_t=-(A+\beta I)u+ g^+(u)+\beta u.
			$$			
Given a mild solution $u(\cdot,u^+,g^+)\in C([0,T_1],X^\alpha)$ we recursively build the following sequence 
			\begin{equation}
\begin{split}
					u_1(t)\!=& F_{u^-}(u(t,u^+\!,g^+\!))(t)\!=\!T(t)u^-\!+\! \displaystyle \int_0^t T(t-s)[g^+(u(s,u^+\!,g^+))+\beta u(s,u^+\!,g^+)]ds; \\\\
					u_2(t)= &F_{u^-}(u_1)(t)=T(t)u^-+ \displaystyle \int_0^t T(t-s)[g^+(u_1(s))+\beta u_1(s)]ds; \\
					\vdots & \\
					u_{n+1}(t)= & F_{u^-}(u_n)(t)=T(t)u^-+ \displaystyle \int_0^t T(t-s)[g^+(u_n(s))+ \beta u_n(s)]ds;
\end{split}
			\end{equation}			
where $T(t)$ is the semigroup generated by $-A-\beta I$. Since $T(t)u^+ \geqslant T(t)u^-$ we have that $u_1(t) \leqslant u(t,u^+,g^+)$, due to $g^+(\cdot) + \beta I $ is an increasing function it follows that $u_2(t)=F_{u^-}(u_1(t))\leqslant F_{u^-}(u(t,u^+,g^+))= u_1(t) \leqslant u(t,u^+,g^+)$ and repeating the same argument we conclude that $u_n(t) \leqslant u(t,u^+,g^+)$.\\
Moreover, as before, we conclude that $\overline{\{u_n: n \in \N\}}$ is compact in $Y_0$. Then there exists a subsequence that we denote $\{u_n\}_{n \in \N} $ (relabeled the same) that converges to $u_*$ and since 
			$$
			\lim_{n \rightarrow \infty} (F_{u^-}u_n)(t)= \lim_{n \rightarrow \infty} u_{n+1}(t)
			$$
			then $(F_{u^-}u_*)(t)\!=\!u_*(t)$ and since $(F_{u^-}u_n)(t)\! \leqslant\! u(t,u^+,g^+)$ it follows that $u_*(t)=u(t,u^-,g^+)$ $ \leqslant u(t,u^+,g^+) $. 		

 \end{itemize}
\end{proof}

\bigskip

In what follows we consider $g$ instead of $g^{+} $ or $g^{-}$ in \eqref{g+-} in order to be able to use it when either $(g^{-}+kI)(\cdot)$  is increasing or when $(g^{+}+kI)(\cdot)$ is increasing.

\begin{theo}
Assume that for every $r>0$ there exists a constant $\beta=\beta(r)>0$ such that  $g(\cdot)+ \beta I $ is positive in the positive elements in $B_r^{X^\alpha}(0)$ and that $g$ is a continuous function. Then, if $0 \leqslant w_0 \in X^\alpha$, the solution of \eqref{g+-} $u(t,w_0,g)$ is positive as long as it exists.
\end{theo}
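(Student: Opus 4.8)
The plan is to prove positivity by \emph{constructing} the solution inside the positive cone, running Schauder's fixed point theorem as in the proof of Theorem~\ref{mildsolution} but over the subset of cone-valued functions; building the solution within the cone from the outset is what circumvents the apparent circularity that $g(u)+\beta u\geqslant 0$ presupposes $u\geqslant 0$. First I would fix $\rho>0$, set $r=\|w_0\|_\alpha+\rho$, and use the hypothesis to choose $\beta=\beta(r)>0$ so that $g(v)+\beta v\in C_\alpha$ whenever $v\in C_\alpha$ and $\|v\|_\alpha\leqslant r$; every element of the fixed point set introduced below will automatically satisfy $\|u(t)\|_\alpha\leqslant r$. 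I then rewrite \eqref{g+-} in the shifted form
\[
\dot u=-(A+\beta I)u+\bigl(g(u)+\beta u\bigr),
\]
exactly as was done for \eqref{traslacionidentidad} in the proof of Theorem~\ref{comparison}, so that the relevant semigroup is $S(t)=e^{-(A+\beta I)t}=e^{-\beta t}e^{-At}$. Since $-A$ has positive resolvent, $e^{-At}$ leaves $C_\alpha$ invariant — this is the order preservation already used in Theorem~\ref{comparison} — and multiplication by the scalar $e^{-\beta t}>0$ gives $S(t)C_\alpha\subset C_\alpha$.

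Next I would take the fixed point set
\[
Y_0=\{u\in C([0,T_1],X^\alpha):u(0)=w_0,\ \|u(t)-w_0\|_\alpha\leqslant\rho,\ u(t)\in C_\alpha\ \text{for all }t\in[0,T_1]\}
\]
and the map $(Fu)(t)=S(t)w_0+\int_0^t S(t-s)\bigl(g(u(s))+\beta u(s)\bigr)\,ds$. Because $C_\alpha$ is closed and convex, $Y_0$ is a closed, convex, bounded subset of $C([0,T_1],X^\alpha)$, and it is nonempty since for $T_1$ small it contains $t\mapsto S(t)w_0\in C_\alpha$. I then must check that $F$ maps $Y_0$ into itself and that $\overline{F(Y_0)}$ is compact. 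The ball estimate $\|(Fu)(t)-w_0\|_\alpha\leqslant\rho$ is obtained by choosing $T_1$ and $\rho$ as in Theorem~\ref{mildsolution}, using the continuity of $g$ to bound $\|g(u(s))+\beta u(s)\|_X$ uniformly on $Y_0$ together with \eqref{asg_est}. The decisive new ingredient is cone invariance: for $u\in Y_0$ one has $u(s)\in C_\alpha$ and $\|u(s)\|_\alpha\leqslant r$, hence $g(u(s))+\beta u(s)\in C_\alpha$ by the choice of $\beta$; since $S(t-s)$ preserves $C_\alpha$ and $C_\alpha$ is closed and convex, the integral of this cone-valued integrand again lies in $C_\alpha$, and adding $S(t)w_0\in C_\alpha$ yields $(Fu)(t)\in C_\alpha$. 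Continuity of $F$ and compactness of $\overline{F(Y_0)}$ (precompactness of each $\{(Fu)(t)\}$ via boundedness of $A^\beta(Fu)(t)$ and the compact embedding $X^{\alpha+\beta}\hookrightarrow X^\alpha$, plus equicontinuity, then Arzel\`a--Ascoli) follow verbatim from the argument in Theorem~\ref{mildsolution}.

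Schauder's fixed point theorem then produces a fixed point $u^*\in Y_0$, which is a mild solution of \eqref{g+-} with $u^*(t)\in C_\alpha$ on $[0,T_1]$. To obtain positivity \emph{as long as the solution exists}, I would iterate this construction: the endpoint $u^*(T_1)\in C_\alpha$ serves as a new positive initial datum and, since $T_1$ may be taken uniform on bounded sets (the last assertion of Theorem~\ref{mildsolution}), the positive solution is continued over each successive interval, covering the whole maximal interval on which the solution remains bounded. I expect the only genuinely new difficulty beyond Theorem~\ref{mildsolution} to be the cone-preservation step: one must know that the shifted semigroup $S(t)$ is positive, which I reduce to the resolvent-positivity hypothesis already invoked in Theorem~\ref{comparison}, and then pass positivity through the variation-of-constants integral, for which the closedness and convexity of $C_\alpha$ are exactly what is required.
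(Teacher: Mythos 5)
Your argument is correct and rests on the same two pillars as the paper's: rewrite the equation with $-(A+\beta I)$ so that the relevant semigroup $e^{-\beta t}e^{-At}$ preserves the closed cone $C_\alpha$ (via the positive-resolvent hypothesis), and use that $g(\cdot)+\beta I$ maps positive elements of $B_r^{X^\alpha}(0)$ into the cone so that the variation-of-constants integrand stays cone-valued. Where you diverge is the fixed-point mechanism: the paper builds the explicit successive approximations $u_{i+1}=F(u_i)$ starting from $w_0$, observes each $u_i\geqslant 0$, extracts a convergent subsequence by the compactness argument of Theorem~\ref{mildsolution}, and identifies the limit as a fixed point; you instead apply Schauder's theorem directly to the closed, convex, bounded set of cone-valued functions $Y_0$, which is precisely the device the paper itself uses in the proof of Theorem~\ref{comparison}(i) for the order-constrained set there. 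The two routes buy slightly different things. Your version is marginally cleaner: restricting Schauder to $Y_0\cap\{u(t)\in C_\alpha\}$ sidesteps the delicate step in an iteration argument where one must pass from convergence of a subsequence $u_{i_k}\to u$ to the identity $Fu=u$ (which tacitly requires the shifted subsequence $u_{i_k+1}$ to converge to the same limit), and you are more careful than the paper in fixing $r=\|w_0\|_\alpha+\rho$ so that the positivity hypothesis on $g+\beta I$ actually applies to every function in the fixed-point set. The paper's iteration, on the other hand, is the template it reuses for Theorem~\ref{comparison}(ii), where a monotone decreasing sequence of approximations is genuinely needed; for pure positivity nothing is lost by your more direct approach. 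Your continuation step and the reduction of semigroup positivity to resolvent positivity match the paper's (implicit) treatment, so no gap remains.
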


\begin{proof}
For $w_0 \in X^\alpha$, we write the equation as
$$
u_t=-(A+\beta I)u+ g(u)+\beta u
$$
and recursively define the sequence $\{u_i: \;i \in \N  \}$
\begin{equation}
\begin{array}{rl}
u_1(t)= & F(w_0)(t)=e^{-(A+\beta I)t}w_0+ \displaystyle \int_0^t e^{-(A+\beta )(t-s)}[g(w_0)+\beta w_0]ds \\\\
u_2(t)= &F(u_1)(t)=e^{-(A+\beta I)t}w_0+ \displaystyle \int_0^t e^{-(A+\beta )(t-s)}[g(u_1(s))+\beta u_1(s)]ds \\
\vdots & \\
u_{i+1}(t)= & F(u_i)(t)=e^{-(A+\beta I)t}w_0+ \displaystyle \int_0^t e^{-(A+\beta )(t-s)}[g(u_i(s))+\beta u_i(s)]ds
\end{array}
\end{equation}
Note that $u_i \geqslant 0$, $i \in \N$ since $w_0 \geqslant 0$ and $g(\cdot)+ \beta I $ is positive in the positive elements in $B_r^{X^\alpha}(0)$.\\
Let $T_1 >0$ be such that 
$$
\| T(t)w_0-w_0 \|_{X^\alpha} \leqslant \rho/2 \,\,\, for \,\, 0 \leqslant t \leqslant T_1,
$$
$$
\| g(u_i) + ku_i\|_{X}  \leqslant N_{T_1}  \,\,\,\, for \,\,all\,\, i \in \N,
$$
$$
N_{T_1}M_\alpha \displaystyle \int_0^t u^{-\alpha}e^{-\omega u}du \leqslant \rho/2 \,\,\, for \,\, 0 \leqslant t \leqslant T_1 \hbox{ and } T_{1}\leqslant M_\alpha N_{T_1} \rho/2 .
$$

Set $Y= C([0,T_1],X^\alpha)$ and $Y_{0}=\{v \in C([0,T_1],X^\alpha): v(0)=w_0 \hbox{ and }  v(t)\geqslant 0 \}$.

Clearly $u_{i+1}=(Fu_i)(t)\geq 0$ for all $t\in [0,T_1]$. As before, $\overline{\{u_i:\; i \in \N \}}$ is compact in $Y_0$ and there exists a subsequence that we denote $\{u_i\}_{i \in \N} $ (relabeled the same) that converges to some function $u\in Y_0$  uniformly in $[0,T_1]$ and since 
$$
\lim_{i \rightarrow \infty} F(u_i)(t)= \lim_{i \rightarrow \infty} u_{i+1}(t),
$$
it follows that $ Fu=u$ and that $u(t) \geqslant 0 $.

With a continuation argument, we can show that $ u(t) \geqslant 0$ as long as the solution is defined.
\end{proof}

In a similar way we can obtain the following result.
 
\begin{theo}\label{corolario12.6}
\begin{itemize}
\item[(i)] Assume that $g(C_\alpha)\subset C_0$. Then $w_0 \geqslant 0$ implies that any solution $u(\cdot)$  of  \eqref{g+-} (with $g$ instead of $g^{\pm})$ must satisfy $u(t) \geqslant 0$  as long as the solution exists.

\item[(ii)] Let $u^+ \geqslant u^-$ be initial conditions in $X^\alpha$, $g^+, \; g^-$ continuous functions such that $g^+ \geqslant g^-$ and either $g^+(\cdot)+ \beta I $ or $g^-(\cdot)+ \beta I $ is an increasing function. If $u(t,u^+,g^+),\; u(t,u^-,g^-)$ are solutions of the respective initial value problem in \eqref{g+-}, then there are solutions $u^*(t,u^+,g^+)$, $u_*(t,u^-,g^-)$ such that $ u(t,u^+,g^+)\geqslant u_*(t,u^-,g^-) $ and $u^*(t,u^+,g^+) \geqslant u(t,u^-,g^-) $.

 \item[(iii)] If $g^{-}, g^{+}$ are such that for every $r>0$ there exist a constant $\beta= \beta(r)>0$ and an increasing function $h(\cdot)$ such that 
 $$
 g^{-}(\cdot) + \beta I \leqslant h(\cdot) \leqslant  g^{+}(\cdot) + \beta I 
 $$
 in $B_r^{X^\alpha}(0)$ ; $u^-,u^+ \in X^\alpha$, $u^- \leqslant u^+,$ and $u(t,u^+,g^+),\; u(t,u^-,g^-)$ solutions of the respective initial value problem in \eqref{g+-}, then there are solutions $u^*(t,u^+,g^+)$, $u_*(t,u^-,g^-)$ such that $u^*(t,u^+,g^+)\geqslant  u(t,u^-,g^-)$ and  $u(t,u^+,g^+) \geqslant u_*(t,u^-,g^-) $.
 \end{itemize}  
\end{theo}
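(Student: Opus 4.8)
The three parts all rest on the variation-of-constants representation \eqref{solufraca} together with the positivity of the semigroup: since $(-A)^{-1}C_\alpha\subset C_\alpha$, the operators $T(t)=e^{-At}$ (and likewise the shifted semigroup generated by $-(A+\beta I)$) map the cone $C_\alpha$ into itself. As in the preceding two proofs, I would rewrite \eqref{g+-} in the shifted form $u_t=-(A+\beta I)u+(g(u)+\beta u)$ and work with the monotone iterations $v_{n+1}=F(v_n)$, whose relative compactness in $C([0,T_1],X^\alpha)$ (hence convergence, by monotonicity together with closedness of the cone) is obtained exactly as before from the Arzel\`a--Ascoli theorem and the compactness of the resolvent.

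For part (ii) the plan is to chain Theorem \ref{comparison} with its dual version (the analogous result noted after that theorem). When $g^+(\cdot)+\beta I$ is increasing I would build a lower solution $u_\ast(t,u^-,g^-)$ beneath the given $u(t,u^+,g^+)$ by first lowering the data (Theorem \ref{comparison}(ii)) and then lowering the nonlinearity (Theorem \ref{comparison}(i)), obtaining $u(t,u^-,g^-)\le u(t,u^-,g^+)\le u(t,u^+,g^+)$; symmetrically, starting the increasing iteration $u_{n+1}(t)=T(t)u^++\int_0^t T(t-s)(g^+(u_n(s))+\beta u_n(s))\,ds$ from the given $u(t,u^-,g^-)$ produces an upper solution $u^\ast(t,u^+,g^+)\ge u(t,u^-,g^-)$, since $u_1\ge u_0$ follows from $u^+\ge u^-$ and $g^+\ge g^-$, while $u_{n+1}\ge u_n$ follows from $g^++\beta I$ being increasing. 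When instead $g^-(\cdot)+\beta I$ is increasing I would run the mirror construction (a decreasing iteration for the lower bound and chained comparisons for the upper bound); either way both required inequalities are obtained.

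For part (iii) I would introduce the intermediate nonlinearity $\tilde g:=h-\beta I$, for which $\tilde g+\beta I=h$ is increasing and $g^-\le\tilde g\le g^+$ on $B_r^{X^\alpha}(0)$. This splits the comparison into two instances to which part (ii) and Theorem \ref{comparison} apply directly: compare the pair $(g^+,\tilde g)$ and then the pair $(\tilde g,g^-)$. Chaining a lower solution for $(g^+,\tilde g)$ with a lower solution for $(\tilde g,g^-)$, and, in the reverse direction, two upper solutions, yields $u_\ast(t,u^-,g^-)\le u(t,u^+,g^+)$ and $u^\ast(t,u^+,g^+)\ge u(t,u^-,g^-)$, exactly as required; the point of $\tilde g$ is that $h$ supplies the monotonicity that neither $g^-$ nor $g^+$ need possess.

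Part (i) is where the real work lies, and the step I expect to be the main obstacle. Here no monotonicity of $g$ is available, so a sub- or supersolution comparison cannot simply be invoked. The plan is direct: from \eqref{solufraca} one has $T(t)w_0\ge0$, and on any initial interval on which $u(s)\in C_\alpha$ the hypothesis $g(C_\alpha)\subset C_0$ gives $g(u(s))\ge0$, whence $u(t)\ge0$; writing $t^\ast=\sup\{\tau:u|_{[0,\tau]}\ge0\}$ and noting that $g(0)\ge0$ makes the constant $0$ a subsolution, I would try to propagate nonnegativity past $t^\ast$. The genuine difficulty is precisely this propagation across the boundary of $C_\alpha$: for a merely positive (not positivity-improving) semigroup the solution could in principle leave the cone the instant it touches $\partial C_\alpha$, and it is exactly the global character of $g(C_\alpha)\subset C_0$ (rather than a ball condition) that must be exploited to exclude this. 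In the concrete model \eqref{problemau} this is guaranteed by the strong positivity of the Dirichlet heat semigroup, which is what closes the argument.
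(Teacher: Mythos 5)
Your treatment of parts (ii) and (iii) is correct and is essentially the paper's own argument. The paper proves (ii) by chaining three applications of Theorem \ref{comparison} through whichever of $g^{\pm}$ carries the monotonicity (first changing the nonlinearity, then the initial datum, then the nonlinearity again), and proves (iii) by ``adding and subtracting $\beta u$'' so that the intermediate increasing function $h$ plays exactly the role of your $\tilde g=h-\beta I$. Your variant for the upper solution in (ii) --- running the monotone iteration $u_{n+1}(t)=T(t)u^{+}+\int_0^t T(t-s)\bigl(g^{+}(u_n(s))+\beta u_n(s)\bigr)\,ds$ started from the given $u(\cdot,u^{-},g^{-})$ --- is the same mechanism already used inside the proof of Theorem \ref{comparison}(ii), so nothing new is required there. (One small point both you and the paper gloss over in (iii): since $\beta=\beta(r)$ and the ordering of the nonlinearities is only assumed on $B_r^{X^\alpha}(0)$, one must first fix $r$ large enough that all solutions involved stay in that ball on the time interval under consideration.)

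Part (i) is where your proposal has a genuine gap, and you are right to flag it --- but you should know that the paper's proof is exactly the two-line direct argument you start from and nothing more: it asserts ``since $w_0\geqslant 0$ and $g(u(t))\geqslant 0$, we have $T(t)w_0\geqslant 0$ and $T(t-s)g(u(s))\geqslant 0$, hence $u(t)\geqslant 0$,'' plus a continuation step. This presupposes $u(s)\in C_\alpha$ in order to invoke $g(C_\alpha)\subset C_0$, so it is circular precisely where you say it is; your write-up is no less complete than the paper's, but neither closes the argument. Moreover the obstruction is not a removable technicality in this generality: without uniqueness a solution can leave the cone through its boundary. Take the scalar mild formulation $u(t)=e^{-t}w_0+\int_0^t e^{-(t-s)}g(u(s))\,ds$ with the cone $[0,\infty)$, $g(u)=0$ for $u\geqslant 0$ and $g(u)=-\sqrt{-u}$ for $u<0$; then $g$ is continuous, $g(C)\subset C$, and $w_0=0\geqslant 0$, yet $u(t)=-\bigl(1-e^{-t/2}\bigr)^{2}$ is a solution (since $v=-u$ solves $\dot v=\sqrt v-v$, $v(0)=0$) which is strictly negative for $t>0$. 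So the claim ``any solution'' in (i) needs an additional hypothesis --- for instance that $g+\beta I$ is increasing on a neighbourhood of the cone, so that an arbitrary solution can be compared with the nonnegative one built by the monotone iteration of the preceding theorem --- or must be weakened to the existence of a nonnegative solution, which that theorem already provides. Finally, deferring to the strong positivity of the Dirichlet heat semigroup does not by itself repair the argument: in the example above the semigroup is as positive as one could wish, and the failure comes from the nonlinearity at the boundary of the cone.
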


\begin{proof}
\begin{itemize}
\item[(i)]Since $w_0 \geqslant 0$ and $g(u(t)) \geqslant 0$ we have  $T(t)w_0$ and $T(t-s)g(u(s)) \geqslant 0$ then $u(t) \geqslant 0$ for all  $t \in [0,T_1]$. A continuation argument shows that this inequality is true as long as the solution exists.

\item[(ii)] Let $h+\beta I$ be increasing with either $h=g^+$ or $h=g^-$. Given $u(\cdot,u^-,g^-)$, from Theorem \ref{comparison}, there is a $u(\cdot,u^-,h)$ with $u(t, u^-,h)\geqslant u(t, u^-,g^-)$ and using that $h+\beta I$ is increasing there is a $u(\cdot,u^+,h)$ such that $u(t, u^+,h)\geqslant u(t, u^-,h)\geqslant u(t, u^-,g^-)$ and finally there is a $u^*(\cdot,u^+,g^+)$ such that 
$u^*(t,u^+,g^+)\geqslant u(t, u^+,h)\geqslant u(t, u^-,h)\geqslant u(t, u^-,g^-)$. In a similar way, using that $h+\beta I$  is increasing, from Theorem \ref{comparison}, we obtain  $u(\cdot, u^+,h)$, $u(\cdot, u^-,h)$ and $u_*(\cdot, u^-,g^-)$ such that
$u(t,u^+,g^+)\geqslant u(t, u^+,h)\geqslant u(t, u^-,h)\geqslant u_*(t, u^-,g^-)$.

   \item[(iii)] After adding and subtracting the term $\beta u $ in the equation \eqref{g+-} the result follows from (ii).

\end{itemize}
\end{proof}

The next Corollary plays an important role in the study of the asymptotic behavior since it is the key result to ensure that the solutions of \eqref{problematimereparametrizado} are globally defined as well as to ensure that they are bounded.

\begin{corol} \label{corolariocomparacion}   Assume that $a\in C(\R^+, [m,M])$ for some choice of $m,M\in (0,\infty)$ and that $f:\R \to \R$ is a continuous function. Assume also that $f$ is such that for every $r>0$ there exists a constant $\gamma= \gamma(r) > 0 $ such that $\gamma I+\lambda f(\cdot)$ is increasing in $(-r,r)$. Let $u_0,u_1, u_2 \in H^{1}_{0}(\Omega), u_0 \leqslant u_1 \leqslant u_2$.

If $g^-:=\frac{\lambda f(\cdot) -K}{M} $ and $g^+:=\frac{\lambda f(\cdot) +K}{m} $ where $K:=\sup_{s}|h(s)|$, for any solution $v(t,\sigma,u_1,g)$ of \eqref{problematimereparametrizado} there are solutions $w(t,\sigma,u_0,g^-)$, $w(t,\sigma,u_1,g^-)$, $z(t,\sigma,u_1,g^+)$, $z(t,\sigma,u_2,g^+)$ such that
\begin{equation*}
\begin{split}
&w(t,\sigma,u_0,g^-)\leqslant w(t,\sigma,u_1,g^-) \leqslant v(t,\sigma,u_1,g)\\
&v(t,\sigma,u_1,g)\leqslant z(t,\sigma,u_1,g^+)\leqslant z(t,\sigma,u_2,g^+).
\end{split}
\end{equation*}
\end{corol}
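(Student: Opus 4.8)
The plan is to reduce the comparison for the nonlocal, nonautonomous problem \eqref{problematimereparametrizado} to the comparison results already established for the autonomous equations \eqref{g+-}. The device is to freeze a given solution and observe that its reaction term, evaluated along the trajectory, is trapped between the two \emph{autonomous} nonlinearities $g^-$ and $g^+$. Concretely, fix a solution $v:=v(\cdot,\sigma,u_1,g)$ and write its reaction along the trajectory as
\[
G(s):=\frac{\lambda f(v(s))+h\bigl(\sigma+\int_\sigma^s a(l(v(\theta)))\,d\theta\bigr)}{a(l(v(s)))}.
\]
Since $|h(\cdot)|\leqslant K$ and $m\leqslant a(l(v(s)))\leqslant M$, the first step is to establish the pointwise sandwich
\[
g^-(v(s))=\frac{\lambda f(v(s))-K}{M}\;\leqslant\; G(s)\;\leqslant\;\frac{\lambda f(v(s))+K}{m}=g^+(v(s)).
\]
Once the reaction is squeezed in this way, $v$ is a subsolution of the $g^+$-problem and a supersolution of the $g^-$-problem, both with the same initial datum $u_1$, and the quasilinear/nonlocal features disappear from the comparison.

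Next I would check that $g^-$ and $g^+$ satisfy the structural hypotheses of Theorem \ref{comparison}. The ordering $g^-\leqslant g^+$ is immediate. For the monotonicity, I use that the map $x\mapsto \gamma x+\lambda f(x)$ is increasing on $(-r,r)$: dividing by the positive constants $m$ and $M$ and adding a constant preserves monotonicity, so $g^+(\cdot)+\tfrac{\gamma}{m}I$ and $g^-(\cdot)+\tfrac{\gamma}{M}I$ are increasing on $B_r^{X^\alpha}(0)$. Thus, with $\beta=\gamma/m$ (resp. $\gamma/M$), both $g^++\beta I$ and $g^-+\beta I$ are increasing, which is precisely what Theorems \ref{comparison} and \ref{corolario12.6} require.

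With these ingredients I would run, verbatim, the monotone-iteration/Schauder construction of the proof of Theorem \ref{comparison}, but comparing $v$ against the autonomous problems. Writing \eqref{problematimereparametrizado} as $\dot v=-(A+\beta I)v+(G+\beta v)$ and the $g^+$-problem with datum $u_1$ as $\dot z=-(A+\beta I)z+(g^+(z)+\beta z)$, the positivity of the semigroup $T(t)$ generated by $-(A+\beta I)$ together with $G(s)\leqslant g^+(v(s))$ and the monotonicity of $g^++\beta I$ shows that the iterates $z_{n+1}(t)=T(t)u_1+\int_0^tT(t-s)[g^+(z_n(s))+\beta z_n(s)]\,ds$, started at $z_0=v$, increase and remain above $v$ (indeed $z_1-v=\int_0^tT(t-s)[g^+(v)-G]\,ds\geqslant0$); their precompact limit is a solution $z(\cdot,\sigma,u_1,g^+)\geqslant v$. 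Symmetrically, $G(s)\geqslant g^-(v(s))$ and monotonicity of $g^-+\beta I$ produce $w(\cdot,\sigma,u_1,g^-)\leqslant v$. This yields the inner inequalities $w(t,\sigma,u_1,g^-)\leqslant v(t,\sigma,u_1,g)\leqslant z(t,\sigma,u_1,g^+)$.

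Finally I would invoke monotone dependence on the initial data, which is exactly Theorem \ref{comparison}(ii) (equivalently Theorem \ref{corolario12.6}(ii)): from $u_0\leqslant u_1$ there is a solution $w(\cdot,\sigma,u_0,g^-)\leqslant w(\cdot,\sigma,u_1,g^-)$, and from $u_1\leqslant u_2$ there is $z(\cdot,\sigma,u_1,g^+)\leqslant z(\cdot,\sigma,u_2,g^+)$; concatenating the four inequalities gives the stated chain, valid on the common interval of existence, after the usual continuation at the endpoint. The step I expect to be the main obstacle is the very first one: the pointwise bound $g^-(v)\leqslant G\leqslant g^+(v)$ hinges on a careful sign analysis of the numerators $\lambda f(v)\pm K$, because dividing a quantity of indefinite sign by $a(l(v))\in[m,M]$ interchanges the extremal roles of $m$ and $M$; one must also carry out this estimate on the bounded range of values realized by the (locally bounded) solution $v$, so that the localized monotonicity on $B_r^{X^\alpha}(0)$ and the attendant constant $\beta=\beta(r)$ are used consistently throughout the iteration.
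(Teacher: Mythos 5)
Your strategy is exactly the paper's: squeeze the nonlocal reaction term evaluated along the trajectory between the two autonomous nonlinearities $g^{\pm}$, use the hypothesis that $\gamma I+\lambda f$ is increasing on bounded sets (together with the embedding $H^1_0(0,1)\subset L^\infty(0,1)$) to make $g^{\pm}+\beta I$ increasing, and then apply Theorem \ref{comparison} and Theorem \ref{corolario12.6} twice --- once to compare $v$ with the $g^{\pm}$-problems issued from the same datum $u_1$, once to vary the data $u_0\leqslant u_1\leqslant u_2$. The paper records the squeeze as inequality \eqref{comparing3functions} and then simply invokes Theorem \ref{corolario12.6}; your monotone-iteration paragraph re-runs the proof of Theorem \ref{comparison} by hand and adds nothing beyond what that theorem already provides, but it is not wrong.

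The step you yourself flag as ``the main obstacle'' is, however, a genuine gap, and you leave it unresolved. The bound $\frac{\lambda f(u)+h(s)}{a(\nu)}\leqslant\frac{\lambda f(u)+K}{m}$ does \emph{not} follow from $m\leqslant a(\nu)\leqslant M$ and $|h|\leqslant K$ alone: one always has $\frac{\lambda f(u)+h(s)}{a(\nu)}\leqslant\frac{\lambda f(u)+K}{a(\nu)}$, but passing from the denominator $a(\nu)$ to the smaller denominator $m$ increases the quotient only when the numerator $\lambda f(u)+K$ is nonnegative. If $\lambda f(u)+K<0$ the inequality can reverse: take $\lambda f(u)=-10$, $K=1$, $h(s)=0$, $a(\nu)=M=100$, $m=1$; then the left side is $-0.1$ and the right side is $-9$. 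The symmetric failure occurs for the lower bound $\frac{\lambda f(u)-K}{M}\leqslant\frac{\lambda f(u)+h(s)}{a(\nu)}$ when $\lambda f(u)-K<0$. So the sandwich requires either a sign condition on $\lambda f\pm K$ over the (bounded) range of values realized by the solution, or a redefinition of $g^{\pm}$ as the true envelopes $\sup_{a\in[m,M]}\frac{\lambda f(\cdot)+K}{a}$ and $\inf_{a\in[m,M]}\frac{\lambda f(\cdot)-K}{a}$, which select the denominator $m$ or $M$ according to the sign of the numerator. To be fair, the paper's own proof asserts \eqref{comparing3functions} with the same $g^{\pm}$ and without any sign analysis, so your proposal reproduces the paper's argument, gap included; but having identified the difficulty you should have either carried out the case analysis or corrected the definition of $g^{\pm}$, rather than asserting the sandwich in your first step and deferring its justification to a closing remark.
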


\begin{proof}
Consider the auxiliary initial boundary value problems

\begin{equation}\label{f-}
\left\{ \begin{array}{lcc}
w_t = w_{xx} + \frac{\lambda f(w) - K}{M} , \,\,\,\,\,\,\,\,\,\, \thinspace t >\sigma , \,\, x \in \Omega,  \\
\\ w(t,\sigma,0)=w(t,\sigma,1)=0,\\
\\ w(\sigma,\sigma,x)=w_0(x),
\end{array}
\right.
\end{equation}
and
\begin{equation}\label{f+}
\left\{ \begin{array}{lcc}
z_t = z_{xx} + \frac{\lambda f(z) +K}{m} , \,\,\,\,\,\,\,\,\,\, \thinspace t >\sigma , \,\, x \in \Omega,  \\
\\ z(t,\sigma,0)=z(t,\sigma,1)=0,\\
\\ z(\sigma,\sigma,x)=w_0(x).
\end{array}
\right.
\end{equation}

Observe that, since $H^1_0(0,1)\subset L^\infty(0,1)$ with continuous embedding, given $r>0$ it is easy to see that there exist $\gamma(r)>0$ such that for $\nu \in \R^{+}$ and $\|u\|_{H^1_0(0,1)} \leqslant r$

\begin{equation}\label{comparing3functions}
\gamma u + \frac{\lambda f(u) - K}{M} \leqslant \gamma u + \frac{\lambda f(u) + h(s)}{a(\nu)} \leqslant \gamma u + \frac{\lambda f(u) +K}{m} , \ u\in B_r^{H^1_0(0,1)}(0)
\end{equation}
with 
$\gamma u + \frac{\lambda f(u) - K}{M} $ and $ \gamma u + \frac{\lambda f(u) +K}{m} $ being increasing functions in the variable $u\in B_r^{H^1_0(0,1)}(0).$\\

The result now follows by successive applications of Theorem \ref{corolario12.6} with
\begin{equation}\label{gpmeg}
\begin{split}
&  g^{-}(t,u)  =\dfrac{\lambda f(u) - K}{M},\\
& g^{+}(t,u)= \dfrac{\lambda f(u) +K}{m} ,\\
& g(t,u)=\dfrac{\lambda f(u) + h(\sigma+\int_\sigma^t a(l(u(\theta)))d\theta)}{a(l(u))}.
\end{split}
\end{equation}
\end{proof}

To prove that all solutions of \eqref{problemau} are globally defined, we need to impose a structural condition (\textbf{S})  on the non-linearity $f$.

\bigskip

To prove the existence of a pullback attractor we assume in addition to (\textbf{S}) a dissipativity condition (\textbf{D}) on $f$. 

\bigskip

For the rest of this section $Au=u_{xx}$ with Dirichlet boundary conditions in the interval $(0,1)$ and we will use the notation of Corollary \ref{corolariocomparacion} for $g,g^+$ and $g^-$. We apply the comparison results of Theorem \ref{corolario12.6}  to $g^-$ and $g$ or $g$ and $g^+$ with $h$ being, respectively, $g^-$ or $g^+$.

\bigskip

We start recalling the following lemma from \cite{acrb-2000}.

\begin{lema}\label{Lema12.7}
Let $\lambda_1$ be the first eigenvalue of $A+\nu C_0I$, which we assume to be positive. If $\alpha_0 >0$ then there is a constant $M=M(A,\lambda_1,\alpha_0)$ such that for every $\alpha_0 \geqslant \alpha \geqslant \beta \geqslant -\alpha_0$
$$
\|e^{-(A+\nu C_0I)t}u_0\|_\alpha \leqslant Me^{-\lambda_1 t} t^{-(\alpha-\beta)}\|u_0\|_\beta   \;\;\;\;\;t>0,\;\;\;u_0 \in X^\beta.
$$
Furthermore, given $q> 1$ there exist a $\delta_0\in (0,1)$ and a constant $\tilde{M}=\tilde{M}(A,\lambda_1)$ such that 
$$
\|e^{-(A+\nu C_0I)t}u_0\|_{L^\infty(\Omega)} \leqslant \tilde{M}t^{-\delta_0}e^{-\lambda_1t} \|u_0\|_{L^{q}(\Omega)}\;\;\;\;\;t>0,\;\;\;u_0 \in L^{q}(\Omega)
$$
\end{lema}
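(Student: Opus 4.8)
The plan is to work directly with the shifted operator $B:=A+\nu C_0 I$. Since $A=-\partial_{xx}$ with Dirichlet conditions on $(0,1)$ is self-adjoint, positive and has compact resolvent, the same holds for $B$; its spectrum is the sequence of eigenvalues $\mu_k=k^2\pi^2+\nu C_0$ with associated orthonormal eigenfunctions $\phi_k$, and $\mu_1=\lambda_1>0$ by hypothesis, so $-B$ generates an exponentially decaying analytic semigroup $e^{-Bt}$. Because $B$ and $A$ differ by the bounded operator $\nu C_0 I$, the fractional powers $B^\theta$ are defined for every $\theta\in\R$ and the norms $\|B^\theta\cdot\|$ and $\|\cdot\|_\theta$ are equivalent, with equivalence constants that can be taken uniform for $|\theta|\leqslant\alpha_0$. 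This reduces the first assertion to a bound for $\|B^{\alpha}e^{-Bt}B^{-\beta}\|_{L(X)}$ that is uniform over the window $\alpha_0\geqslant\alpha\geqslant\beta\geqslant-\alpha_0$.

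For that bound I would first invoke the spectral representation. Setting $\gamma:=\alpha-\beta\in[0,2\alpha_0]$, the identity $\|B^{\alpha}e^{-Bt}u_0\|^2=\sum_k\mu_k^{2\gamma}e^{-2\mu_k t}\,\mu_k^{2\beta}|\langle u_0,\phi_k\rangle|^2$ reduces matters to controlling $\sup_{\mu\geqslant\lambda_1}\mu^{\gamma}e^{-\mu t}$, since the remaining factor is exactly $\|B^{\beta}u_0\|^2$. The scalar supremum is handled by writing $\mu^{\gamma}e^{-\mu t}=\big(\mu^{\gamma}e^{-(\mu-\lambda_1)t}\big)e^{-\lambda_1 t}$ and optimizing the bracket over $\mu\geqslant\lambda_1$: the short-time regime produces the singular factor $t^{-\gamma}$, while $\mu\geqslant\lambda_1$ furnishes the exponential $e^{-\lambda_1 t}$. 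As $\gamma$ ranges over the compact interval $[0,2\alpha_0]$ the constant is taken depending only on $A,\lambda_1,\alpha_0$. Alternatively one may simply apply estimate \eqref{asg_est} to the sectorial operator $B$, whose spectral lower bound is $\lambda_1$, together with the norm equivalence above. I expect the delicate point here to be the \emph{sharpness} of the exponential rate: the generic sectorial estimate \eqref{asg_est} only yields $e^{-\omega t}$ for $\omega<\lambda_1$ (for large $t$ the supremum is attained near $\mu=\lambda_1$ and carries no extra $t^{-\gamma}$ decay), and it is precisely self-adjointness, through the spectral calculation just described, that lets one retain the full rate $\lambda_1$ while keeping $t^{-(\alpha-\beta)}$ to govern the singularity at $t=0$.

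For the second estimate I would chain the first one between fractional power spaces through Sobolev embeddings, exploiting the one-dimensional setting. Choose $\theta\in(1/4,1/2)$ so that $2\theta>1/2$ and hence $X^{\theta}=H^{2\theta}(0,1)\cap\{\text{Dirichlet BC}\}\hookrightarrow L^\infty(\Omega)$. On the data side, if $q\geqslant 2$ then $L^q(\Omega)\hookrightarrow L^2(\Omega)=X^0$, while for $1<q<2$ duality gives $L^q(\Omega)\hookrightarrow X^{-s}$ for any $s$ with $X^{s}\hookrightarrow L^{q'}(\Omega)$, that is $2s\geqslant 1/q-1/2$, so some $s\in[0,1/4)$ always works. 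Combining the embeddings with the first part applied between $X^{-s}$ and $X^{\theta}$ yields
$$
\|e^{-Bt}u_0\|_{L^\infty(\Omega)}\leqslant C\|e^{-Bt}u_0\|_{\theta}\leqslant CM e^{-\lambda_1 t}t^{-(\theta+s)}\|u_0\|_{-s}\leqslant \tilde M e^{-\lambda_1 t}t^{-\delta_0}\|u_0\|_{L^q(\Omega)},
$$
with $\delta_0:=\theta+s\in(0,1)$ (indeed $\delta_0<1/2$), which is the claimed inequality. The residual work is bookkeeping of the embedding constants, which depend only on $q$, $A$ and $\lambda_1$, so $\tilde M=\tilde M(A,\lambda_1)$ once $q$ is fixed.
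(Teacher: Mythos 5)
First, a point of reference: the paper does not prove this lemma at all --- it is explicitly ``recalled'' from \cite{acrb-2000} --- so there is no in-paper proof to compare against. Your architecture (spectral calculus for the self-adjoint shifted operator $B=A+\nu C_0I$ to get the $X^\beta\to X^\alpha$ smoothing estimate, then chaining through Sobolev embeddings $X^\theta\hookrightarrow L^\infty$ and $L^q\hookrightarrow X^{-s}$ for the second inequality) is the natural one and is essentially how the cited result is obtained; the second half of your argument, including the exponent bookkeeping $\delta_0=\theta+s<1$, is fine.

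There is, however, a genuine failure in the key scalar step of the first part. You claim that $\sup_{\mu\geqslant\lambda_1}\mu^{\gamma}e^{-(\mu-\lambda_1)t}\leqslant Ct^{-\gamma}$ uniformly for $t>0$, and you single out precisely this point, asserting that self-adjointness lets one keep the sharp rate $e^{-\lambda_1 t}$ \emph{together with} the factor $t^{-\gamma}$. That cannot be true when $\lambda_1$ is exactly the bottom of the spectrum: the supremum is attained at $\mu=\lambda_1$ for $t\geqslant\gamma/\lambda_1$ and equals $\lambda_1^{\gamma}e^{-(\lambda_1-\lambda_1)t}=\lambda_1^{\gamma}$, which does not tend to $0$, whereas $Ct^{-\gamma}\to0$. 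Equivalently, testing the asserted inequality on the first eigenfunction $\phi_1$ gives $\|e^{-Bt}\phi_1\|_{\alpha}=\lambda_1^{\alpha-\beta}e^{-\lambda_1t}\|\phi_1\|_{\beta}$, which is not $O(e^{-\lambda_1t}t^{-(\alpha-\beta)})$ as $t\to\infty$ when $\alpha>\beta$. So your proof of the first estimate breaks down for large $t$ --- and in fact the estimate as transcribed in the paper is itself false in that regime. The discrepancy comes from the statement in \cite{acrb-2000}, where $\lambda_1$ is a \emph{strict} lower bound for $\mathrm{Re}\,\sigma(A+\nu C_0I)$ rather than the first eigenvalue itself; with that reading your optimization goes through, since $\sup_{\mu}\mu^{\gamma}e^{-(\mu-\lambda_1)t}$ is then taken over $\mu\geqslant\lambda_1+\varepsilon$ and the leftover $e^{-\varepsilon t}$ absorbs the missing $t^{-\gamma}$ for large $t$. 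The honest fixes are either to take any $\lambda<\lambda_1$ in the exponential, or to replace $t^{-(\alpha-\beta)}$ by $\max\{1,t^{-(\alpha-\beta)}\}$; either version suffices for every use of the lemma in Proposition \ref{proposicion12.8}, where only the singularity near $t=0$ and some exponential decay at infinity are needed. You should state and prove the corrected version rather than defend the literal one.
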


Now, we are going to prove global existence of solutions and obtain bounds on the global attractors assuming conditions $\mathbf{(S)}$ and $\mathbf{(D)}$ as needed. A very important preliminary result is a pointwise bound on the global attractor as given by the following result. 

\begin{prop}\label{proposicion12.8}
Let $w_0\in H^1_0(0,1)$, $\sigma\in \R$ and let $u(\cdot,\sigma, w_0)$ be a solution of \eqref{problemau}.
\begin{itemize}
\item[(i)] If (\textbf{S}) holds, then $u(\cdot,\sigma,w_0)$ is globally defined.
\item[(ii)]If (\textbf{D}) holds, there are constants $K_\infty$ and $K_\alpha$,   $\alpha \in (0,1)$ such that 
\begin{equation}\label{k_infty}
\limsup_{\sigma\longrightarrow- \infty }\|u(t+\sigma,\sigma,w_0)\|_{L^{\infty}(\Omega)} \leqslant K_\infty,
\end{equation}

\begin{equation}\label{k_r}
\limsup_{\sigma\longrightarrow -\infty }\|u(t+\sigma,\sigma,w_0)\|_{\alpha } \leqslant K_\alpha,
\end{equation}
and these limits are uniform for $w_0$ in bounded subsets of  $H^{1}_{0}(\Omega)$ and for $\sigma\in \R$.
\item[(iii)] In addition, if (\textbf{D}) holds, $w_0 \in H^{1}_{0}(\Omega)$ and $\phi$ is the solution of 
\begin{equation}\label{A+C_0}
\left\{ \begin{array}{lcc}
(A+\nu C_0)\phi=C_1 , \,\,\,\,\,\,\,\,\,\, \thinspace in\;\; \Omega,  \\
\\ \phi=0, \;\;\;\;\ in\;\; \partial \Omega,\\
\end{array}
\right.
\end{equation}
then $0 \leqslant \phi \in L^{\infty}(\Omega), \; \limsup_{t\rightarrow \infty} |u(t+\sigma,\sigma,w_0)(x)| \leqslant \phi(x)$ uniformly in $x\in \bar{\Omega}$ and for $w_0$ in bounded subsets of $H^{1}_{0}(\Omega)$. Also, if $|w_0(x)|\leqslant \phi(x)$ for all $x\in \bar{\Omega}$,  then $|u(t+\sigma,\sigma,w_0)(x)|\leqslant \phi(x)$ for all $x\in \bar{\Omega}$ and $t \geqslant 0$.
\end{itemize}
\end{prop}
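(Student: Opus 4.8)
The plan is to derive all three parts from the sandwiching provided by Corollary \ref{corolariocomparacion}, reducing the analysis of the nonlocal, non-autonomous solution $u(\cdot,\sigma,w_0)$ of \eqref{problemau} to that of the two scalar-reaction comparison problems \eqref{f-} and \eqref{f+}. Throughout, because solutions need not be unique, every inequality will be read as an existence statement: there exist comparison solutions $w(\cdot,\sigma,\cdot,g^-)$ and $z(\cdot,\sigma,\cdot,g^+)$ with $w\le u\le z$ as long as all three are defined, exactly as furnished by Theorem \ref{corolario12.6} and Corollary \ref{corolariocomparacion}.

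For (i) I would first show that $w$ and $z$ cannot blow up in finite time. Condition (\textbf{S}) gives a one-sided bound: dividing $uf(u)\le -\nu C_0u^2+C_1|u|$ by $u$ shows the reaction terms in \eqref{f-} and \eqref{f+} grow at most linearly on the relevant side, so testing the equations against $z$ (resp. $w$) and using Poincar\'e yields $\frac{d}{dt}\|z\|_{L^2}^2\le C\|z\|_{L^2}^2+C'$, whence Gronwall gives an $L^2$ bound that is finite on every bounded interval. Since the comparison functions are only mild solutions, this energy inequality should be justified on $(0,T]$ using the analytic smoothing $z(t)\in X^\beta$ for $t>0$, or by approximation. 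Smoothing in the variation of constants formula \eqref{solufraca} then bootstraps this to a bound for $\|z(t)\|_\alpha$ that is finite on bounded intervals, and the extension criterion noted after Theorem \ref{mildsolution} makes $w,z$ globally defined. Feeding the resulting pointwise $L^\infty$ bound on $u$ (via $w\le u\le z$ and $H^1_0(0,1)\hookrightarrow L^\infty$) back into \eqref{gdef}, where $a\ge m$ and $|h|\le K$ keep $g$ bounded, and applying \eqref{asg_est} once more, yields a finite $\|u(t)\|_\alpha$ on bounded intervals, so $u(\cdot,\sigma,w_0)$ itself extends globally.

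For (iii) I would construct $\phi=(A+\nu C_0)^{-1}C_1$; since $-A$, hence $A+\nu C_0I$ (using $\lambda_1>0$ from (\textbf{D})), has positive resolvent, the order-preserving resolvent gives $0\le\phi$ (as $C_1\ge0$), and one-dimensional elliptic regularity gives $\phi\in L^\infty(\Omega)$. The heart of the matter is to verify that $\phi$ is a stationary super-solution of \eqref{f+} and $-\phi$ a stationary sub-solution of \eqref{f-}: rewriting $-A\phi=\nu C_0\phi-C_1$ and using (\textbf{S}) with $\phi\ge0$ to control $\frac{\lambda f(\phi)+K}{m}$, the inequality $\phi_{xx}+g^+(\phi)\le 0$ must be checked, and it is precisely here that one needs (\textbf{S}) for both $\nu=\tfrac m\lambda$ and $\nu=\tfrac M\lambda$, so that the bound survives the sign-dependent division by every $a\in[m,M]$. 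With this in place, Theorem \ref{comparison}(ii)/\ref{corolario12.6} gives the absolute bound: if $|w_0|\le\phi$ then $-\phi\le w\le u\le z\le\phi$ for all $t\ge 0$; and for arbitrary $w_0$ the exponential estimate of Lemma \ref{Lema12.7} applied to $z-\phi$ (resp. $w+\phi$), written with the shifted generator, shows $z(t)$ is driven below $\phi$ and $w(t)$ above $-\phi$, yielding the $\limsup$ statement uniformly on bounded sets of data.

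Finally, (ii) follows by taking norms in the bounds of (iii): $K_\infty$ is essentially $\|\phi\|_{L^\infty}$, and for $K_\alpha$ one inserts the now uniform $L^\infty$, hence $L^q$, bound on the nonlinearity into \eqref{solufraca} and uses the decaying estimate $\|e^{-(A+\nu C_0)t}\cdot\|_\alpha\le Me^{-\lambda_1t}t^{-(\alpha-\beta)}\|\cdot\|_\beta$ from Lemma \ref{Lema12.7}; the factor $e^{-\lambda_1t}$, guaranteed by (\textbf{D}), makes the convolution integral converge uniformly as the elapsed time grows, i.e. as $\sigma\to-\infty$. I expect the main obstacle to be the super-solution verification in (iii): keeping the estimate uniform over the whole diffusion range $[m,M]$ while controlling the sign-dependent division by $a$, together with the bookkeeping needed to phrase each comparison as an existence statement about specific constructed solutions rather than about a (non-unique) solution.
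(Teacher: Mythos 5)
Your overall strategy (sandwich $u$ by comparison solutions, then use the decay of $e^{-(A+\nu C_0)t}$ from Lemma \ref{Lema12.7} and the variation of constants formula) is the right one, and parts (ii) and the invariance statement in (iii) are handled essentially as in the paper. But there is one step that does not work as written, and it is the step that carries the whole asymptotic argument. In (iii), for arbitrary $w_0$ you propose to apply the exponential estimate of Lemma \ref{Lema12.7} to $z-\phi$ (resp.\ $w+\phi$), where $z$ is a solution of the \emph{nonlinear} problem \eqref{f+} and $\phi$ solves \eqref{A+C_0}. Lemma \ref{Lema12.7} is a statement about the linear semigroup $e^{-(A+\nu C_0 I)t}$; the difference $z-\phi$ does not satisfy the homogeneous linear equation $v_t+(A+\nu C_0)v=0$, so the estimate cannot be applied to it. The missing ingredient is one further comparison with a \emph{linear} majorant: condition (\textbf{S}) is designed precisely so that $g(t,u)\leqslant -\nu C_0 u+C_1$ (and $\geqslant -\nu C_0u-C_1$), hence $|u(t+\sigma,\sigma,w_0)|\leqslant w^{+}(t+\sigma,\sigma,|w_0|)$ where $w^{+}$ solves $w_t+Aw=-\nu C_0w+C_1$, $w(\sigma)=|w_0|$. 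This is the paper's device: $w^{+}=\phi+e^{-(A+\nu C_0)t}(|w_0|-\phi)$ explicitly, the second term decays by Lemma \ref{Lema12.7} under (\textbf{D}), and the $\limsup\leqslant\phi(x)$ statement, $K_\infty$, and the sign argument for $|w_0|\leqslant\phi$ all drop out at once. Your super-solution verification for $\phi$ relative to $g^{+}$ is a reasonable substitute for the invariance claim, but it does not by itself give the attraction of arbitrary data toward $\phi$; for that you still need the linear comparison (or a separate nonlinear argument you have not supplied).

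The same linear majorant also streamlines part (i). You prove global existence of the nonlinear comparison solutions $w,z$ of \eqref{f-}--\eqref{f+} via an $L^2$ energy estimate plus Gronwall, which requires justifying the energy identity for mild solutions by smoothing or approximation (you note this, but it is real extra work, and the subsequent bootstrap from $L^2$ to $H^1_0$ needs care with the singular kernel). The paper avoids all of this: since $g(t,u)\leqslant -\nu C_0u+C_1$ pointwise, one compares directly with $w^{+}$, which is globally defined and explicitly bounded, obtains $|u|\leqslant w^{+}$ and hence a uniform $L^\infty$ bound on $u$ as long as it exists, and then a single application of the variation of constants formula bounds $\|u\|_{H^1_0}$ on finite intervals, so the extension criterion applies. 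In short: your route is repairable, but the repair consists exactly of introducing the linear comparison problem $w_t+Aw=-\nu C_0w\pm C_1$, at which point the energy estimate and the super-solution computation become unnecessary.
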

\begin{proof}
Let B be a bounded set in $H^{1}_{0}(\Omega)$ and assume that $w_0\in B.$
\begin{itemize}
\item[(i)] Let $w^+(t+\sigma,\sigma,|w_0|)$ be the solution of 
\begin{equation}
\left\{ \begin{array}{lcc}
w_t + Aw(t)= -\nu C_0w+C_1 \;\;\; x\in \Omega, \;t>\sigma,\\\\
w=0, \;\; in\;\;x\in \partial \Omega, \;t>\sigma,\\\\
w(\sigma,|w_0|)=|w_0|.
\end{array}
\right.
\end{equation}

Since $g(t,w)\leqslant -\nu C_0w+C_1$ we obtain by Corollary \ref{corolariocomparacion}   $u(t+\sigma,\sigma,w_0)\leqslant w^+(t+\sigma,\sigma,|w_0|)$ as long as $u(t+\sigma,\sigma,w_0)$ exists. We consider 
\begin{equation}
\left\{ \begin{array}{lcc}
w_t + Aw(t)= -\nu C_0w-C_1 \;\;\; x\in \Omega, \;t>\sigma,\\\\
w=0, \;\; in\;\;x\in \partial \Omega, \;t>\sigma,\\\\
w(\sigma,|w_0|)=|w_0|,
\end{array}
\right.
\end{equation}
to obtain  $u(t+\sigma,\sigma,w_0) \geqslant w^-(t+\sigma,\sigma,-|w_0|)=-w^+(t+\sigma,\sigma,|w_0|)$ as long as $u(t+\sigma,\sigma,w_0)$ exists. Therefore
\begin{equation}\label{modulodeu}
|u(t+\sigma,\sigma,w_0)| \leqslant w^+(t+\sigma,\sigma,|w_0|),
\end{equation}
as long as $u(t+\sigma,\sigma,w_0)$ exists.
Let $\phi \in L^{\infty}(\Omega)$ be the solution of \eqref{A+C_0}, define $v=w^+-\phi,$ which satisfies a linear homogeneous equation $v_t+(A+\nu C_0)v=0$.

From Lemma \ref{Lema12.7}, 
\begin{equation}
\|v(t) \|_{L^{\infty}(\Omega)} =\| e^{-(A+\nu C_0)t}( |w_0| - \phi) \|  \leqslant \Tilde{M} t^{-\delta_0} e^{-\lambda_1 t}\| |w_0| - \phi \|_{L^q(\Omega)}
\end{equation}
and
$$
\|w^+(t)\|_{L^{\infty}(\Omega)} \leqslant \|v(t)\|_{L^{\infty}(\Omega)} + \|\phi\|_{L^{\infty}(\Omega)}\;\;\forall t>0.
$$
Consequently the $L^{\infty}(\Omega)$ norm of $u(t+\sigma,\sigma,w_0)$ remains bounded uniformly in time as long as it exists. Using  the variation of constants formula for the solution we have 
\begin{equation}
       \begin{array}{rl}
         \|u(t+\sigma,\sigma,w_0)\|_{H^{1}_{0}} & \leqslant \|e^{-At}w_0 \|_{H^{1}_{0}} + \displaystyle \int_\sigma^{t+\sigma} \|A^{1/2}
         e^{-A(t+\sigma-s)}\| \|g(u(s+\sigma,\sigma,w_0))\|ds \\
         & \leqslant  M\|w_0\|_{H^{1}_{0}} + MC\int_0^t(t-\theta)^{-1/2}d\theta.
       \end{array}
\end{equation}
Thus the $H^{1}_{0}(\Omega)$ norm of $u(t+\sigma,\sigma,w_0)$ remains bounded on finite time intervals and therefore the solution is global.
\item[(ii)]Since  assumption (\textbf{D}) holds, using Lemma \ref{Lema12.7}, $v=w^+-\phi$ satisfies 
$$
\| (v(t)\|_{L^{\infty}(\Omega)}\leqslant \tilde{M}e^{-\lambda_1t} t^{-\delta_0}\| |w_0|-\phi\|_{L^{q}(\Omega)},
$$
for all $t>0.$ Consequently,
$$
\|w^+(t+\sigma,\sigma,|w_0|)\|_{L^{\infty}(\Omega)}\leqslant \| (v(t)\|_{L^{\infty}(\Omega)} + \|\phi\|_{L^{\infty}(\Omega)}\longrightarrow \|\phi\|_{L^{\infty}(\Omega)}
$$
as $t \to \infty$, uniformly for $w_0\in B,$  then, using \eqref{modulodeu} we obtain \eqref{k_infty}.\\
Now we are going to use the variation of constants formula for the proof of \eqref{k_r}. Let $t_0$ be such that $\|u(t+\sigma,\sigma,w_0)\|_{L^{\infty}(\Omega)} \leqslant K_\infty +1$ for $t \geqslant t_0$. Thus 
\[
\begin{array}{rl}
\|u(t+\sigma,\sigma,w_0)\|_{\alpha}& =  \| e^{-(A+\nu C_0I)t}w_0+  \displaystyle \int_\sigma^{t+\sigma} e^{-(A+\nu C_0I)(t+\sigma-s)}g(u(s))ds\|_{\alpha}\\\\
&\leqslant Me^{-\lambda_1t}t^{-\alpha}\|w_0\|_{X}+MC(K_\infty) \displaystyle \int_0^t e^{-\lambda_1(t-\theta)}(t-\theta)^{-\alpha}d\theta,
\end{array}
\]
then $\displaystyle\limsup_{t\longrightarrow \infty} \|u(t+\sigma,\sigma,w_0)\|_{\alpha} \leqslant M_1C(K_\infty) \displaystyle \int_0^\infty e^{-\lambda_1 s} s^{-\alpha}ds$, for $\alpha\in [\frac12,1)$.

Therefore, we get that there exists a constant $K_\alpha$ such that \eqref{k_r} holds. 
\item[(iii)]Let $\phi$ be the solution of \eqref{A+C_0}. We know that $w^+=v+\phi$ and since $v$ goes to zero uniformly in $x \in \bar{\Omega}$ and for $w_0\in B$, then using \eqref{modulodeu} we have 
$$
\limsup_{t\longrightarrow \infty}|u(t+\sigma,\sigma,x)|\leqslant \phi(x) \;\;\; \textrm{for}\;\textrm{all}\;x \in \Omega.
$$
Now, if $|w_0| \leqslant \phi$ for all $x \in \bar{\Omega}$, since $|w_0|=w(\sigma,|w_0|) \leqslant v + \phi$, follows that  $v(t)\leqslant 0 $ on $\Omega$ and then $v(t)(x)\leqslant 0$. Thus $|u(t+\sigma,\sigma,w_0)| \leqslant \phi(x)$ for all $x \in \Omega$ and for all $t \geqslant 0.$
\end{itemize}
\end{proof}

\begin{corol}
Let the assumptions of Corollary \ref{corolariocomparacion} be satisfied. If (\textbf{S}) holds, then the solutions of \eqref{problema} with $w_0 \in H^{1}_{0}(\Omega)$ are globally defined.
\end{corol}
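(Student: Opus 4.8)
The plan is to deduce the global existence of solutions of the quasilinear problem \eqref{problema} from the global existence of solutions of the time-reparametrized problem \eqref{problemau} (equivalently \eqref{problematimereparametrizado}), exploiting the one-to-one correspondence between the two families of solutions set up in Section \ref{setting_of_the_problem}. The core ingredient is Proposition \ref{proposicion12.8}(i), which, under hypothesis (\textbf{S}), guarantees that every solution $u(\cdot,\sigma,w_0)$ of \eqref{problemau} with $w_0\in H^1_0(\Omega)$ is defined on the whole half-line $[\sigma,\infty)$. It therefore suffices to show that globality is preserved under the change of time scale $\tau=\alpha(t)$.

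First I would examine the time-change map attached to a given solution $u$,
$$
\alpha(t)=\sigma+\int_\sigma^t a(l(u(r)))\,dr .
$$
By the hypotheses of Corollary \ref{corolariocomparacion} one has $a\in C(\R^+,[m,M])$ with $0<m\leqslant a\leqslant M$, so along any continuous solution the integrand $r\mapsto a(l(u(r)))$ is continuous and takes values in $[m,M]$. Consequently $\alpha$ is $C^1$, strictly increasing, with $\alpha'(t)=a(l(u(t)))\in[m,M]$, and it obeys the two-sided bound
$$
\sigma+m(t-\sigma)\leqslant \alpha(t)\leqslant \sigma+M(t-\sigma),\qquad t\geqslant\sigma .
$$
The same reasoning applies to the inverse change $\alpha^{-1}(\tau)=\sigma+\int_\sigma^\tau a(l(w(r)))^{-1}\,d r$, whose derivative lies in $[1/M,1/m]$. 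Thus both $\alpha$ and $\alpha^{-1}$ are bi-Lipschitz homeomorphisms of $[\sigma,\infty)$ onto itself; in particular $\alpha(t)\to\infty$ as $t\to\infty$, and $\alpha$ carries finite maximal times to finite maximal times and $+\infty$ to $+\infty$.

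With this in hand I would conclude as follows. Given any solution $w$ of \eqref{problema} with maximal interval $[\sigma,\tau_{\max})$, set $t=\alpha^{-1}(\tau)=\sigma+\int_\sigma^\tau a(l(w(r)))^{-1}\,d r$ as in Section \ref{setting_of_the_problem}; then $u(t)=w(\tau)$ is a solution of \eqref{problemau} on $[\sigma,t_{\max})$, where $t_{\max}=\lim_{\tau\to\tau_{\max}}\alpha^{-1}(\tau)$ is finite if and only if $\tau_{\max}$ is, since $\alpha^{-1}{}'\in[1/M,1/m]$. Proposition \ref{proposicion12.8}(i) forces $t_{\max}=\infty$, whence $\tau_{\max}=\lim_{t\to\infty}\alpha(t)=\infty$; equivalently, reading the correspondence in the other direction, the globally defined $u$ yields $w(\tau)=u(\alpha^{-1}(\tau))$ on all of $\alpha([\sigma,\infty))=[\sigma,\infty)$. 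Hence every solution of \eqref{problema} is global. The one point demanding care is precisely the surjectivity of $\alpha$ onto $[\sigma,\infty)$: it is the uniform lower bound $a\geqslant m>0$ that forces $\alpha(t)\to\infty$ and so excludes the a priori possible pathology in which $u$ lives on all of $[\sigma,\infty)$ while $\tau=\alpha(t)$ stays bounded, leaving $w$ defined only on a finite $\tau$-interval. The upper bound $a\leqslant M$ has already been consumed, through Corollary \ref{corolariocomparacion} and Proposition \ref{proposicion12.8}, in securing the global existence of $u$ itself.
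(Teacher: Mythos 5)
Your proposal is correct and follows essentially the same route as the paper, whose proof consists of the single line that the result ``follows immediately from Proposition \ref{proposicion12.8} and Corollary \ref{corolariocomparacion}.'' You supply the detail the paper leaves implicit, namely that the bounds $0<m\leqslant a\leqslant M$ make the solution-dependent time change $\alpha$ a bi-Lipschitz homeomorphism of $[\sigma,\infty)$ onto itself, so that global existence for the reparametrized problem \eqref{problemau} transfers to \eqref{problema}; this is exactly the intended argument.
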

\begin{proof}
Follows immediately from Proposition \ref{proposicion12.8} and  Corollary \ref{corolariocomparacion}.
\end{proof}
\begin{obs}
Notice that the previous corollary not only allows us to guarantee that the solutions of  \eqref{problemau} are globally defined, but also those of \eqref{problema}. From that, we also obtain bounds on the solutions of problem \eqref{problema} as well. With this in mind, we are ready to prove the existence of attractors for the autonomous and non-autonomous case.
    
\end{obs}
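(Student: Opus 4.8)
The plan is to transfer the global existence of the reparametrized semilinear problem \eqref{problemau}, which is furnished by Proposition \ref{proposicion12.8}(i), back to the original quasilinear problem \eqref{problema} through the time rescaling introduced in Section \ref{setting_of_the_problem}. First I would check that the hypotheses of Corollary \ref{corolariocomparacion} ($a\in C(\R^+,[m,M])$ together with the almost-monotonicity of $\lambda f$) are exactly what is needed to sandwich the nonlinearity $g$ between the two increasing comparison nonlinearities $g^-$ and $g^+$, so that the comparison bounds of Corollary \ref{corolariocomparacion} are in force. Combined with the structural condition (\textbf{S}), these bounds are precisely the input of Proposition \ref{proposicion12.8}(i), which yields that every solution $u(\cdot,\sigma,w_0)$ of \eqref{problemau} with $w_0\in H^1_0(\Omega)$ is defined for all $t\in[\sigma,\infty)$.

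Next I would exploit the explicit correspondence between \eqref{problema} and \eqref{problemau} recorded in Section \ref{setting_of_the_problem}: a solution $w$ of \eqref{problema} is related to a solution $u$ of \eqref{problemau} by $w(\tau)=u(t)$ with $\tau=\alpha(t)=\sigma+\int_\sigma^t a(l(u(r)))\,dr$, and conversely $t=\alpha^{-1}(\tau)=\sigma+\int_\sigma^\tau a(l(w(r)))^{-1}\,dr$. Thus, given a solution $w$ of \eqref{problema}, the associated $u$ solves \eqref{problemau} and is global in $t$ by the previous paragraph; it then remains to reconstruct $w$ on the whole half-line $\tau\in[\sigma,\infty)$ via $w(\tau)=u(\alpha^{-1}(\tau))$, which requires that $\alpha$ map $[\sigma,\infty)$ onto $[\sigma,\infty)$.

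The point requiring care is that the time change $\tau=\alpha(t)$ depends on the unknown solution itself, so one must rule out that an infinite $t$-horizon collapses to a finite $\tau$-horizon. This is exactly where the lower bound $a\geqslant m>0$ enters: since $\alpha(t)=\sigma+\int_\sigma^t a(l(u(r)))\,dr\geqslant \sigma+m(t-\sigma)\to\infty$ as $t\to\infty$, and $\alpha$ is continuous and strictly increasing, it is a homeomorphism of $[\sigma,\infty)$ onto itself; hence global existence in $t$ is equivalent to global existence in $\tau$, and the solution of \eqref{problema} is global. The upper bound $a\leqslant M$, by contrast, is not needed for this transfer but for the two-sided comparison of Corollary \ref{corolariocomparacion} and for condition (\textbf{S}), imposed for both $\nu=\tfrac{m}{\lambda}$ and $\nu=\tfrac{M}{\lambda}$, which are what make Proposition \ref{proposicion12.8}(i) applicable in the first place. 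With the surjectivity of $\alpha$ in hand, the conclusion follows immediately from Proposition \ref{proposicion12.8} and Corollary \ref{corolariocomparacion}, as asserted.
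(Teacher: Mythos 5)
Your argument is correct and follows the same route the paper intends: global existence for \eqref{problemau} via Proposition \ref{proposicion12.8}(i) together with Corollary \ref{corolariocomparacion}, then transfer to \eqref{problema} through the reparameterization $\tau=\alpha(t)$ of Section \ref{setting_of_the_problem}. The paper leaves the transfer implicit (``follows immediately''), and your one genuine addition --- that $a\geqslant m>0$ forces $\alpha(t)\geqslant \sigma+m(t-\sigma)\to\infty$, so the infinite $t$-horizon cannot collapse to a finite $\tau$-horizon --- is exactly the detail that makes the implication rigorous.
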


\section{Pullback Attractors and Multivalued Processes}

In this section we introduce multivalued processes in order to study the asymptotic behavior of the solutions of \eqref{problemau} with $h$ non-zero. The autonomous case, that is, the existence of a global attractor for a multivalued semiflow, follows  as a corollary.

Let $X$ be a metric space with metric $\rho:X\times X\to \R^+$ and $P(X)$ $\mathscr{B}(X)$ be  the set of subsets and bounded subsets of $X$, respectively.  

Let us denote $W_\tau = C([\tau,\infty), X)$ and let $\mathcal{G} = \{\mathcal{G}(\tau)\}_{t \in \R}$ consists of maps $\phi \in W_\tau$. Let us consider the following conditions:

 \begin{itemize}
     \item[(C1)] For each $\tau \in \R$ and $z \in X$ there exists at least one $\phi \in \mathcal{G}(\tau) $ with $\phi(\tau)=z$;
     \item[(C2)] If $\phi \in \mathcal{G}(\tau)$ and $s \geqslant 0$, then $\phi^{+s} \in \mathcal{G}(\tau + s)$, where $\phi^{+s}:= \phi\big|_{[\tau + s , \infty)}$;
 \item[(C3)](Concatenation) If $\phi,\psi \in \mathcal{G}$ with $\phi \in \mathcal{G}(\tau), \psi \in\mathcal{G}(r)$ and $\phi(s)=\psi(s)$ for some $s \geqslant r \geqslant \tau$, then $\theta \in \mathcal{G}(\tau)$, where 
 $\theta (t): = \left\{ \begin{array}{lc} \phi(t), & t \in [\tau,s] ; \\  \psi (t), &  t \in (s, \infty).
\end{array} \right. $   
\item[(C4)] If $\{\phi_j\}_{j \in \N} \subset \mathcal{G}(\tau)$ and $\phi_j(\tau) \rightarrow z$, then there exists a subsequence $\{\phi_{\mu}\}_{\mu \in \N}$ of $\{\phi_j\}_{j \in \N}$ and $\phi \in \mathcal{G}(\tau)$ with $\phi(\tau)=z$ such that $\phi_{\mu}(t) \rightarrow \phi(t)$ for each $t \geqslant \tau.$

     \end{itemize} 
     
We define a multivalued map $U(t,\tau):X \rightarrow P(X)$ associated with the family $\mathcal{G}$ in the following way:
$$
U(t,\tau)x:= \{\phi(t); ~\phi \in \mathcal{G}(\tau), ~\phi(\tau) =x\}.
 $$
If (C1)-(C2) hold, then the map $U$ is a multivalued process, that is:
\begin{itemize}
    \item $U(t,t)x=x$ for all $t \in \R,~x\in X;$
    \item $U(t,\tau)x \subset U(t,s)U(s,\tau)x$ for all $- \infty < \tau \leqslant s \leqslant t < \infty,~x \in X;$ where $U(t,s)C = \bigcup_{x \in C} U(t,s)x$ for any $C \subset X.$ 
\end{itemize}

\begin{prop}\label{procesogeneralizado} Assume that $f:\R\to \R$ and $a,h:\R\to\R, ~l:H^1_0(0,1) \to \R$,  are continuous and that condition $\mathbf{(S)}$ is satisfied, and let $g:\R\times H^1_0(0,1) \times C([0,\infty),H^1_0(0,1)) \to H^1_0(0,1)$ be the operator function defined by the map $g$ in \eqref{gpmeg}. The family $\mathcal{G}= \{\mathcal{G}(\tau)\}_{\tau \in \R},$ given by \\
$$
\mathcal{G}(\tau):=\left\lbrace v \!\in\! C([\tau,+\infty),H^1_0(\Omega))\!:\! v(t)\!=\!T(t-\tau)v_0 \!+\!\! \displaystyle \int_\tau^t \!\!T(t-s)g(s,v^s(s),v^s(\cdot))ds, v_0 \in H^1_0(\Omega)
\right\rbrace,
$$ 
 satisfies (C1)-(C4).
\end{prop}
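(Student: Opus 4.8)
The plan is to verify each of the four conditions (C1)--(C4) in turn, since the family $\mathcal{G}(\tau)$ consists precisely of the mild solutions of the time-reparametrized problem with forcing $g$. Conditions (C1)--(C3) are essentially structural consequences of the variation-of-constants representation, while (C4) is the genuinely analytic statement and the one requiring the compactness machinery already developed in the proof of Theorem~\ref{mildsolution}.

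For (C1), given $\tau\in\R$ and $v_0\in H^1_0(\Omega)$, existence of at least one $\phi\in\mathcal{G}(\tau)$ with $\phi(\tau)=v_0$ is exactly the local existence statement of Theorem~\ref{mildsolution} applied to the continuous nonlinearity $g$ from \eqref{gpmeg}; under the structural condition $\mathbf{(S)}$, Proposition~\ref{proposicion12.8}(i) guarantees this mild solution is globally defined, so that $\phi\in C([\tau,\infty),H^1_0(\Omega))$ as required. For (C2), I would simply restrict a given mild solution $\phi$ to $[\tau+s,\infty)$ and check that $\phi^{+s}$ satisfies the variation-of-constants formula with initial time $\tau+s$ and initial datum $\phi(\tau+s)$; this follows from the semigroup property $T(t-\tau)=T(t-\tau-s)T(s)$ together with splitting the integral $\int_\tau^t=\int_\tau^{\tau+s}+\int_{\tau+s}^t$ and reabsorbing the first part into the homogeneous term. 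For (C3), the concatenated function $\theta$ agrees with $\phi$ on $[\tau,s]$ and with $\psi$ on $(s,\infty)$; continuity at $t=s$ holds since $\phi(s)=\psi(s)$, and one verifies directly that $\theta$ solves the integral equation on each subinterval, again using the semigroup property to glue the two representations at $t=s$.

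The main obstacle is condition (C4), the sequential compactness/closedness property. Here I would take $\{\phi_j\}\subset\mathcal{G}(\tau)$ with $\phi_j(\tau)\to z$, and the goal is to extract a subsequence converging pointwise (in $t$) to some $\phi\in\mathcal{G}(\tau)$ with $\phi(\tau)=z$. The strategy mirrors the Arzel\`a--Ascoli argument in the proof of Theorem~\ref{mildsolution}: the sequence $\phi_j(\tau)$ is bounded in $X^\alpha$, so by the uniform-in-bounded-sets local existence time (last sentence of Theorem~\ref{mildsolution}) all $\phi_j$ are defined on a common interval $[\tau,\tau+T_1]$; the a~priori bounds from Proposition~\ref{proposicion12.8} then give uniform bounds in $X^\alpha$ on compact time intervals, so each fixed-time slice $\{\phi_j(t)\}_j$ is precompact in $X^\alpha$ (using that $A$ has compact resolvent and the smoothing estimate \eqref{asg_est}), and the family is equicontinuous in $t$ by the same estimates used to show equicontinuity of $F(Y_0)$ in Theorem~\ref{mildsolution}. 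Arzel\`a--Ascoli yields a subsequence converging uniformly on compact subintervals to some limit $\phi$.

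The final step is to pass to the limit inside the variation-of-constants formula: continuity of $g$ and uniform convergence of $\phi_\mu\to\phi$ let me replace $g(s,\phi_\mu^s(s),\phi_\mu^s(\cdot))$ by $g(s,\phi^s(s),\phi^s(\cdot))$ under the integral (dominated convergence, with the dominating bound $N_a$ from the proof of Theorem~\ref{mildsolution}), so $\phi$ itself satisfies the defining integral equation and hence $\phi\in\mathcal{G}(\tau)$, with $\phi(\tau)=\lim_j\phi_j(\tau)=z$. To upgrade convergence from a single interval $[\tau,\tau+T_1]$ to every $t\geqslant\tau$, I would iterate: on the next interval the new initial data $\phi_\mu(\tau+T_1)$ converge (along the extracted subsequence) to $\phi(\tau+T_1)$, so the argument repeats, and a diagonal extraction produces a single subsequence working on all of $[\tau,\infty)$. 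The delicate point throughout is that the nonlinearity $g$ depends on the \emph{history} $\phi^s(\cdot)$ through the nonlocal-in-time term $h(\sigma+\int_\tau^s a(l(\phi(\theta)))\,d\theta)$; uniform convergence of $\phi_\mu$ on $[\tau,s]$ together with continuity of $a\circ l$ and $h$ is exactly what makes this history-dependent term converge, so care must be taken that the convergence is uniform on the relevant time intervals rather than merely pointwise.
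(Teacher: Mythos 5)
Your proposal is correct and follows essentially the same route as the paper: (C1)--(C3) are treated as structural consequences of the variation-of-constants formula (the paper simply declares them easy), and (C4) is proved exactly as in the paper via the smoothing estimate $\|A^{\beta}\phi_j(t)\|_{1/2}<\infty$, the compact embedding $X^{1/2+\beta}\subset\subset H^1_0(\Omega)$ for precompactness of time slices, and the equicontinuity estimates from Theorem \ref{mildsolution}, followed by Arzel\`a--Ascoli. If anything, you are more complete than the paper, which omits the final passage to the limit in the integral equation and the handling of the history-dependent term that you correctly flag as the delicate point.
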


\begin{proof}
It is easy to see that (C1), (C2) and (C3) hold. And (C4) follows in the following way
If $\phi_j \in \mathcal{G}(\tau), j \in \N ,~ 0 \leqslant t \leqslant t_1 $ such that $\phi_j(\tau)=v_j \rightarrow v_0$ and $0<\beta+1/2<1$ we have that 

\begin{equation*}
\begin{split}
\| A^{\beta}\phi_j(t) \|_{\frac12} & \leqslant \| A^{\beta}T(t-\tau)\phi_j(\tau) \|_{\frac12} + \displaystyle \int_{\tau}^{t} \| A^{\beta} T(t-s) g(s,\phi_j^s(s),\phi^{s}_j(\cdot))\|_{\frac12}ds  \\\\
& \leqslant M_{\beta} (t-\tau)^{-\beta}\|\phi_j(\tau)\|_{\frac12} + NM \displaystyle \int_\tau^t (t-s)^{-1/2 -\beta}ds  < \infty
\end{split}
\end{equation*}
since $X^{1/2 + \beta } \subset \subset H^{1}_{0}(\Omega)$, it follows that 
  $\{u_j(t): \; u_j(\cdot) \in  \mathcal{G}(\tau), \; j \in \N \}$ is precompact in $H^{1}_{0}(\Omega)$.
\begin{equation*}
\begin{split}
\|\phi_j(t_2)-\phi_j(t_1)\|_{\frac{1}{2}} & \leqslant \left\|(T(t_2-\tau)-T(t_1-\tau)) \phi_j(\tau)+\int_\tau^{t_2} T(t_2-s) g((s, \phi_j^s(s), \phi_j^s(\cdot)) d s\right. \\
& \left.-\int_\tau^{t_1} T(t_1-s) g((s, \phi_j^s(s), \phi_j^s(\cdot)) d s \pm \int_\tau^{t_1} T(t_2-s) g((s, \phi_j^s(s), \phi_j^s(\cdot)) d s \right\|_{\frac{1}{2}} \\
& \leqslant\|(T(t_2\!-\!t_1)\!-\!I) T(t_1-\tau) \phi_j(\tau)\|_{\frac{1}{2}}\!+\!N \!\int_\tau^{t_1}\!\!\!\|T(t_2-s)\!-\!T(t_1-s)\|_{L(X, X^{\frac{1}{2}})} d s \\
& +\int_{t_1}^{t_2}\|T(t_2-s) g((s, \phi_j^s(s), \phi_j^s(\cdot))\|_{\frac{1}{2}} d s \\
& \leqslant \|(T(t_2-t_1)-I)A^{-\beta} T(t_1-\tau) A^\beta\phi_j(\tau)\|_{\frac{1}{2}} \\
& +\!N\! \int_\tau^{t_1}\!\!\|(T(t_2\!-\!t_1)-I) A^{-\beta}\! A^{\beta+\frac{1}{2}} T(t_1-s)\|_{L(X)} d s\!+\!N M^{\prime} \int_{t_1}^{t_2}(t_2-s)^{-\frac{1}{2}} d s \\
& \leqslant C(t_2-t_1)^{\beta}  +N K \int_\tau^{t_1}(t_2-t_1)^\beta(t_1-s)^{-\frac{1}{2}-\beta} d s+N K^{\prime} \int_{t_1}^{t_2}(t_2-s)^{-\frac{1}{2}} d s.
\end{split}
\end{equation*}
This last expression is independent of $\phi_j, \; j \in \N$, and tends to zero as $t_2 - t_1 \rightarrow 0$  since $T(t)$ is strongly continuous at $t=0$.
Note also that, for each $T>0$,
\[
\sup_{t \in [\tau,\tau+T], \; j \in \N} \|\phi_j(t)\|_{H^{1}_{0}}  < C.
\]
Then, (C4) follows. 

 \end{proof}
\begin{defin}\cite{simsen2016}
    The map $\psi: \R \rightarrow X$ is called a global solution through $x \in X $ if $\psi(\tau)= x$ for some $\tau \in \R$ and $\psi_{s}= \psi\big|_{[\tau + s , \infty)} \in \mathcal{G}(\tau+s)$ for all $s \in \R.$
    
\end{defin}

Notice that every global solution $\psi$ satisfies 
$$
\psi(t) \in U(t,s)\psi(s), ~for ~any~s\leqslant t.
 $$
\begin{defin}\cite{simsen2016}
Let $A = \{A(t)\}_{t \in \R}$ be a family of subsets of $X$.
\begin{itemize}
\item $A$ is positively invariant if $U(t, \tau )A(\tau ) \subset A(t)$ for all $ - \infty < \tau \leqslant t < \infty$;
\item  $A$ is negatively invariant if $A(t) \subset U(t,\tau )A(\tau)$ for all $ - \infty < \tau \leqslant t < \infty$;
\item $A$ is invariant if $U(t, \tau )A(\tau ) = A(t) $ for all $ -\infty < \tau \leqslant t < \infty.$

\end{itemize}
    
\end{defin}

\begin{defin}\cite{simsen2016}
    The family $K = \{K(t)\}_{t \in R}$ is called pullback attracting for $U$ if
$$
dist(U(t, s) B,K(t)) \rightarrow 0, ~as ~ s \rightarrow - \infty, ~for~ all ~B \in B(X) ,~ t \in \R.
$$
That is, if it pullback attracts any bounded set at any time $t \in \R$.
\end{defin}

\begin{defin}\cite{simsen2016}
The family $\mathcal{B} = \{\mathcal{B}(t)\}_{t \in \R}$ is said to be a global pullback attractor for $U$ if:
\begin{itemize}
    \item[(1)] $\mathcal{B}$ is compact;
    \item [(2)] $\mathcal{B}$ is pullback attracting;
    \item[(3)] $\mathcal{B}$ is negatively invariant;
    \item[(4)] $\mathcal{B}$ is minimal, that is, if $\tilde{\mathcal{B}}= \{\tilde{\mathcal{B}}(t)\} $ is a closed pullback attracting family, then $\mathcal{B}(t) \subset \tilde{\mathcal{B}}(t)$ for all $ t \in \R.$
    
    \end{itemize}

\end{defin}
An essential property for proving the existence of a pullback attractor is the so-called pullback asymptotic compactness.

\begin{defin}\cite{simsen2016}
$U$ is called pullback asymptotically compact at time $t$ if for all
$B \in B(X)$ each sequence $\{\xi_j\}_{j \in \N}$ such that $\xi_j \in U(t, \tau_j) B$, where $\tau_j \rightarrow - \infty $, has a convergent subsequence. If this property is satisfied for each time $t \in \R$, then we say that $\mathcal{G}$ is pullback asymptotically compact.
\end{defin}

\begin{defin}\cite{simsen2016}
The family $\{A(t)\}_{t \in R}$ pullback absorbs bounded subsets of $X$ if for each $t \in \R,~ B \in B(X)$ there exists $T = T(t, B) \leqslant t$ such that $U(t,\tau)B \subset A(t),$ for 
all $\tau \leqslant T.$
\end{defin}

\begin{defin}\cite{simsen2016}
$U$ is called pullback bounded dissipative if there exists a family
$\mathcal{B}_0 := \{B_0(t)\}_{t \in \R}$ with $B_0(t) \in B(X)$ for any $t \in \R$ which pullback absorbs bounded subsets of $X$. $\mathcal{B}_0$ is said to be pullback absorbing. It is said to be monotonically pullback bounded dissipative if, in addition, $B_0(s) \subset B_0(t)$ for every $s \leqslant t$.
\end{defin}

The previous definition is the  strongly pullback bounded dissipative property introduced in \cite{caraballo2010existence} to study the non-autonomous single-valued case.
 
\begin{theo}\cite{simsen2016}\label{atractorpullback} Let (C1)-(C2), (C4) hold. Then $U$ is pullback asymptotically
compact and monotonically pullback bounded dissipative if and only if it possesses the unique backwards bounded global pullback attractor $\mathcal{B} = \{\mathcal{B}(s) : s \in \R \}$
defined by
$$
\mathcal{A}(t) = \bigcup_{B \in B(X)} \omega(t,B).
$$
If (C3) is also satisfied, then $\mathcal{A}$ is invariant.
\end{theo}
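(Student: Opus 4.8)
The plan is to prove the stated equivalence by treating each implication separately, with almost all of the work concentrated in the direction that constructs the attractor from pullback asymptotic compactness together with monotone pullback bounded dissipativity. The reverse implication is the easy one: if $\mathcal{A}=\{\mathcal{A}(t)\}$ is a compact pullback attracting family, then each $\mathcal{A}(t)$ is bounded and the enlarged family $\bigcup_{s\leqslant t}\mathcal{A}(s)$ is monotone and pullback absorbing, so $U$ is monotonically pullback bounded dissipative; moreover, any sequence $\xi_j\in U(t,\tau_j)B$ with $\tau_j\to-\infty$ eventually lies in an arbitrarily small neighborhood of the compact set $\mathcal{A}(t)$ by the attracting property, hence admits a convergent subsequence, which is exactly pullback asymptotic compactness.

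For the main direction I would work with the omega-limit sets
\[
\omega(t,B)=\bigcap_{s\leqslant t}\overline{\bigcup_{\tau\leqslant s}U(t,\tau)B},
\]
characterized sequentially as the set of limits $y=\lim_j\xi_j$ with $\tau_j\to-\infty$ and $\xi_j\in U(t,\tau_j)B$. Pullback asymptotic compactness immediately gives that each $\omega(t,B)$ is nonempty and compact, and a standard contradiction argument (extracting from a hypothetically non-attracted sequence a subsequence whose limit, by the sequential characterization, must belong to $\omega(t,B)$) shows that $\omega(t,B)$ pullback attracts $B$ at time $t$. The monotone absorbing family $\mathcal{B}_0=\{B_0(t)\}$ then supplies uniform control: every bounded $B$ is eventually absorbed, so $\mathcal{A}(t)=\bigcup_{B\in B(X)}\omega(t,B)$ is confined to $\overline{B_0(t)}$ and, using the monotonicity of $\mathcal{B}_0$, is identified with the single compact set $\omega(t,B_0(t))$; in particular $\mathcal{A}(t)$ is compact.

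With $\mathcal{A}(t)$ so defined, compactness and pullback attraction are already in hand, and minimality is immediate, since any closed pullback attracting family must contain each $\omega(t,B)$ (every point of $\omega(t,B)$ being a pullback limit of points whose distance to that family tends to zero). The substantive property is negative invariance, $\mathcal{A}(t)\subset U(t,\tau)\mathcal{A}(\tau)$. Given $y\in\omega(t,B)$ realized by $\xi_j=\phi_j(t)$ with $\phi_j\in\mathcal{G}(\tau_j)$ and $\xi_j\to y$, I would apply pullback asymptotic compactness at the intermediate time $\tau$ to extract a subsequence with $\phi_j(\tau)\to z\in\omega(\tau,B)\subset\mathcal{A}(\tau)$, and then invoke property (C4) to produce a limit trajectory $\phi\in\mathcal{G}(\tau)$ with $\phi(\tau)=z$ and $\phi(t)=y$, placing $y$ in $U(t,\tau)\mathcal{A}(\tau)$. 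Finally, under the additional concatenation hypothesis (C3) I would establish the reverse inclusion $U(t,\tau)\mathcal{A}(\tau)\subset\mathcal{A}(t)$ by gluing a trajectory segment that realizes a point of $\mathcal{A}(\tau)$ as a pullback limit to a segment realizing its image under $U(t,\tau)$, obtaining a single admissible trajectory and passing to the limit to land in $\omega(t,B)\subset\mathcal{A}(t)$; this yields full invariance.

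I expect the negative invariance step to be the main obstacle, and it is precisely where the multivalued structure intervenes: one cannot invert the process, so the limit point $y$ must be reached by reconstructing a genuine element of $\mathcal{G}(\tau)$. This requires combining equicontinuity and precompactness estimates of the type carried out in the proof of Proposition \ref{procesogeneralizado} (to keep the trajectory family $\{\phi_j\}$ compact in $C$ on compact time intervals) with the closedness property (C4), whose role is exactly to guarantee that the extracted pointwise limit is an admissible trajectory and not merely a limit of values; the concatenation property (C3) plays the analogous indispensable role for the positive-invariance inclusion.
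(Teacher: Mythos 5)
A preliminary but important remark: the paper does not prove Theorem \ref{atractorpullback} at all; it is imported from \cite{simsen2016} and used as a black box, so there is no in-paper argument to compare yours against. Judged on its own, your outline follows the standard architecture for such characterizations: the easy converse (noting that you should pass to an $\varepsilon$-neighborhood of the backwards-bounded union $\bigcup_{s\leqslant t}\mathcal{A}(s)$ to turn \emph{attraction} into \emph{absorption}), the construction $\mathcal{A}(t)=\omega(t,B_0(t))$ via the monotone absorbing family, minimality, and negative invariance through (C4). The negative-invariance argument is correct and is indeed where (C4) does its work; note, however, that the equicontinuity and precompactness estimates you allude to belong to the \emph{verification} of (C4) for the concrete family of Proposition \ref{procesogeneralizado}, not to the abstract theorem, where (C4) is simply a hypothesis to be invoked.

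The one step that does not work as described is the positive-invariance inclusion $U(t,\tau)\mathcal{A}(\tau)\subset\mathcal{A}(t)$. You propose to glue a trajectory segment ``realizing a point $y\in\mathcal{A}(\tau)$ as a pullback limit'' to a segment $\psi\in\mathcal{G}(\tau)$ with $\psi(\tau)=y$. But the trajectories $\phi_j\in\mathcal{G}(\tau_j)$ witnessing $y\in\omega(\tau,B)$ satisfy only $\phi_j(\tau)\to y$, not $\phi_j(\tau)=y$, while (C3) requires exact coincidence of values at the gluing time; ``passing to the limit'' cannot repair this, because the concatenation of $\phi_j$ with $\psi$ is not an admissible trajectory for any finite $j$. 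The standard fix uses the negative invariance you have already established: for each $r\leqslant\tau$ choose $z_r\in\mathcal{A}(r)$ and $\theta_r\in\mathcal{G}(r)$ with $\theta_r(r)=z_r$ and $\theta_r(\tau)=y$ \emph{exactly}, concatenate $\theta_r$ on $[r,\tau]$ with $\psi$ on $[\tau,\infty)$ via (C3) to obtain $\Theta_r\in\mathcal{G}(r)$ with $\Theta_r(t)=\psi(t)$ and $\Theta_r(r)=z_r$ lying in the backwards-bounded set $\bigcup_{s\leqslant\tau}\mathcal{A}(s)$; letting $r\to-\infty$ exhibits $\psi(t)$ as an element of $\omega\bigl(t,\bigcup_{s\leqslant\tau}\mathcal{A}(s)\bigr)\subset\mathcal{A}(t)$. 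With this repair your outline matches the argument of \cite{simsen2016}.
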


\begin{theo}
 Let $f, a\circ l, h$ be continuous functions such that $(\boldsymbol{D})$ holds. Assume also that $f$ is such that, for every $r>0$, there is a constant $\gamma=\gamma(r)>0$ such that $\gamma I+\lambda f(\cdot)$ is increasing in $[-r,r]$. Then the multivalued process $U(t, \tau)$ generated by $\mathcal{G}$ defined in Proposition \ref{procesogeneralizado} has a pullback attractor $\mathcal{B}=\{\mathcal{B}(s): s \in \mathbb{R}\}$.
\end{theo}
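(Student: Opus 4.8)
The plan is to verify the hypotheses of Theorem \ref{atractorpullback} for the multivalued process $U(t,\tau)$ generated by $\mathcal{G}$. Proposition \ref{procesogeneralizado} already guarantees that $\mathcal{G}$ satisfies (C1)--(C4), so the only work left is to establish two properties: that $U$ is \emph{monotonically pullback bounded dissipative} and that $U$ is \emph{pullback asymptotically compact}. Once these are in hand, Theorem \ref{atractorpullback} yields the pullback attractor $\mathcal{B}=\{\mathcal{B}(s):s\in\R\}$, and since (C3) also holds by Proposition \ref{procesogeneralizado}, the family is invariant. Throughout, the phase space is $X^{1/2}=H^1_0(0,1)$, and the hypothesis $(\boldsymbol{D})$ ensures that the first eigenvalue $\lambda_1$ of $A+\nu C_0 I$ is positive, which is exactly what makes the estimates of Proposition \ref{proposicion12.8} available.

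For the dissipativity I would invoke Proposition \ref{proposicion12.8}(ii), which under $(\boldsymbol{D})$ produces a constant $K_{1/2}$ with
$$
\limsup_{\sigma\to-\infty}\|u(t+\sigma,\sigma,w_0)\|_{1/2}\leqslant K_{1/2},
$$
the bound being uniform for $w_0$ in bounded subsets of $H^1_0(0,1)$ and for $\sigma\in\R$. Because this $\limsup$ is uniform in $\sigma$ and independent of $t$, the fixed ball $B_0:=\{v\in X^{1/2}:\|v\|_{1/2}\leqslant K_{1/2}+1\}$ pullback absorbs every bounded set: for each $t\in\R$ and $B\in\mathscr{B}(X)$ there is $T=T(B)\leqslant t$ such that $U(t,\tau)B\subset B_0$ whenever $\tau\leqslant T$. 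Setting $B_0(t)\equiv B_0$ gives a pullback absorbing family for which $B_0(s)\subset B_0(t)$ holds trivially for $s\leqslant t$, so $U$ is monotonically pullback bounded dissipative.

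For pullback asymptotic compactness the key is the smoothing estimate \eqref{k_r}, valid for every $\alpha\in[\tfrac12,1)$. Fix $\alpha\in(\tfrac12,1)$, let $B$ be bounded and take $\xi_j\in U(t,\tau_j)B$ with $\tau_j\to-\infty$; write $\xi_j=u(t,\tau_j,w_j)$ with $w_j\in B$. Since the evaluation time $t$ is fixed and $\tau_j\to-\infty$, the elapsed time tends to infinity, and \eqref{k_r} (uniform in $w_0\in B$) gives $\limsup_j\|\xi_j\|_{\alpha}\leqslant K_\alpha$; as each $\xi_j$ already lies in $X^\alpha$ by the analytic smoothing, the whole sequence is bounded in $X^\alpha$. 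Because $A$ has compact resolvent, the embedding $X^\alpha\hookrightarrow X^{1/2}=H^1_0(0,1)$ is compact, so $\{\xi_j\}$ is precompact in $H^1_0(0,1)$ and admits a convergent subsequence. This yields pullback asymptotic compactness at every time $t$.

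With (C1)--(C4), monotone pullback bounded dissipativity and pullback asymptotic compactness all verified, Theorem \ref{atractorpullback} delivers the pullback attractor, invariant thanks to (C3). The genuine difficulty is not in this final assembly but is concentrated in Proposition \ref{proposicion12.8}: the passage from the pointwise comparison bound \eqref{modulodeu} to the uniform $X^\alpha$ estimate \eqref{k_r}, which rests on the exponential decay supplied by $(\boldsymbol{D})$ and Lemma \ref{Lema12.7}, is what produces compactness beyond mere boundedness. Granting that proposition, what remains here is a routine check that the abstract existence criterion applies.
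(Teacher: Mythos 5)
Your proposal is correct and follows essentially the same route as the paper: (C1)--(C4) from Proposition \ref{procesogeneralizado}, the uniform bounds of Proposition \ref{proposicion12.8} under $(\boldsymbol{D})$ to build a constant-in-time absorbing ball (monotone pullback bounded dissipativity) and to get boundedness in $X^{\alpha}$ for $\alpha>\tfrac12$, the compact embedding $X^{\alpha}\hookrightarrow X^{1/2}=H^1_0(0,1)$ for asymptotic compactness, and then Theorem \ref{atractorpullback}. Your write-up is in fact more explicit than the paper's own proof, which compresses these steps into a few lines and additionally cites Corollary \ref{corolariocomparacion} (needed, via the monotonicity hypothesis on $\gamma I+\lambda f$, to make Proposition \ref{proposicion12.8} applicable).
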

\begin{proof}
Properties $(C 1)-(C 4)$ have been checked in Proposition \ref{procesogeneralizado}. To prove that $\mathcal{G}$ is pullback asymptotically compact, pullback bounded dissipative and monotonically pullback bounded dissipative we apply the results of Proposition \ref{proposicion12.8} and the compactness of the embedding of $X^{\frac{1}{2}+\beta}$ into $X^{\frac{1}{2}}=H_0^1(0,1)$ for $0<\beta<\frac{1}{2}$. That is, defining $B_0(t):=\overline{B_{K^\alpha}^{H_0^1}(0)}$ for all $t \in \mathbb{R}$ follow by Proposition \ref{proposicion12.8} and by Corollary \ref{corolariocomparacion}  , that $U$ is monotonically pullback bounded dissipative and the result follows using Theorem \ref{atractorpullback}.
 \end{proof}

 \begin{corol}
 Let $f, a\circ l, h$ be continuous functions such that $(\boldsymbol{D})$ holds. Assume also that $f$ is such that, for every $r>0$, there is a constant $\gamma=\gamma(r)>0$ such that $\gamma I+\lambda f(\cdot)$ is increasing in $[-r,r]$. Then $\mathscr{A}^{-}, \mathscr{B}(t), \mathscr{A}^{+}$are subsets of
$$
\Sigma(\phi):=\left\{u \in L^{\infty}(\Omega),\left|u\left(t,\sigma, u_0\right)(x)\right| \leqslant \phi(x),t\geqslant \sigma \ \text { a.e. } x \in \Omega\right\} .
$$
for all $t \in \mathbb{R}$, where $\mathscr{A}^{-}\left(\right.$resp. $\left.\mathscr{A}^{+}\right)$is the global attractor of multivalued semiflow $\mathcal{G}^{-}\left(\right.$resp. $\left.\mathcal{G}^{+}\right)$ defined by the solutions of \eqref{f-} (resp. \eqref{f+}) and $\mathscr{B}=\{\mathscr{B}(t): t \in \mathbb{R}\}$ is the multivalued pullback attractor of the multivalued process $U(t, \tau)$ associated to the solutions of \eqref{problematimereparametrizado} .
     \end{corol}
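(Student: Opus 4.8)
The plan is to derive everything from the pointwise bound in Proposition \ref{proposicion12.8}(iii), combined with the fact that a (pullback) attractor is swept out by trajectories arriving from bounded data in the infinite past. I will use throughout that $X^{1/2}=H^1_0(0,1)$ embeds continuously into $L^\infty(\Omega)$, so that convergence in the phase space forces uniform, hence pointwise a.e., convergence; this makes the inequality $|\cdot|\le\phi(x)$ stable under passage to the limit.

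First I would observe that the two autonomous problems \eqref{f-} and \eqref{f+} are themselves instances of the general model \eqref{problematimereparametrizado}: problem \eqref{f+} corresponds to the constant choices $a\equiv m$, $h\equiv K$, and \eqref{f-} to $a\equiv M$, $h\equiv -K$. Since conditions $(\textbf{S})$ and $(\textbf{D})$ are hypotheses on $f$ alone and are assumed for both $\nu=m/\lambda$ and $\nu=M/\lambda$, Proposition \ref{proposicion12.8} applies verbatim to \eqref{f-} and \eqref{f+}. Thus each of the three problems produces globally defined solutions for which, uniformly for the initial datum in a bounded subset of $H^1_0(0,1)$ and uniformly in the initial time,
$$
\limsup_{t\to\infty}\bigl|u(t+\sigma,\sigma,w_0)(x)\bigr|\le \phi(x)\qquad\text{uniformly in }x\in\bar\Omega ,
$$
with the \emph{same} $\phi$ solving \eqref{A+C_0}. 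The reason a single $\phi$ serves all three is precisely the requirement of $(\textbf{S})$ for both values of $\nu$: whatever the coefficient $a\in[m,M]$ and forcing $|h|\le K$, the associated nonlinearity is dominated, for nonnegative arguments and symmetrically for nonpositive ones, by the affine map $w\mapsto -\nu C_0 w+C_1$ attached to \eqref{A+C_0}, so the linear comparison function $w^+$ of \eqref{modulodeu} and its reflection $-w^+$ enclose all three flows and converge to $\pm\phi$.

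Next I would translate this asymptotic bound into membership in $\Sigma(\phi)$. For the pullback attractor $\mathscr{B}(t)$, by Theorem \ref{atractorpullback} each $\xi\in\mathscr{B}(t)$ belongs to a pullback $\omega$-limit, hence is the $H^1_0$-limit of values $\xi_j=u(t,\tau_j,w_{0,j})$ with $w_{0,j}$ in a fixed bounded set and $\tau_j\to-\infty$. Writing $t=(t-\tau_j)+\tau_j$, the elapsed time $t-\tau_j\to\infty$, so the uniform $\limsup$ above gives $|\xi_j(x)|\le\phi(x)+\varepsilon$ for $j$ large; letting $j\to\infty$ through the continuous embedding into $L^\infty$ and then $\varepsilon\to0$ yields $|\xi(x)|\le\phi(x)$ a.e., i.e.\ $\xi\in\Sigma(\phi)$. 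For the global attractors $\mathscr{A}^{\pm}$ the argument is identical with a bounded complete trajectory replacing the pullback sequence: every $\xi\in\mathscr{A}^{\pm}$ equals $z(0)$ for a trajectory with $z(\mathbb{R})\subset\mathscr{A}^{\pm}$ bounded, and $z(0)$ is reached from $z(s)$ after elapsed time $-s\to\infty$, so the asymptotic bound for \eqref{f-}/\eqref{f+} along this trajectory forces $|z(0)(x)|\le\phi(x)$.

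The main obstacle is the verification underlying the second paragraph: that one and the same $\phi$ dominates the three distinct flows. This is exactly where the full strength of $(\textbf{S})$ (both $\nu=m/\lambda$ and $\nu=M/\lambda$) and of the ordering \eqref{comparing3functions} enters, guaranteeing that the affine comparison $-\nu C_0 w+C_1$ is valid simultaneously for $g^-$, $g$ and $g^+$ despite the sign-dependent effect of dividing by $a\in[m,M]$. A secondary technical point is the careful use of the multivalued structure, namely (C4) and the characterization of $\mathscr{B}(t)$ in Theorem \ref{atractorpullback}, to ensure that attractor points genuinely are limits of solutions issued from bounded data pulled infinitely far back, so that the uniform-in-$\sigma$ asymptotic bound of Proposition \ref{proposicion12.8} can legitimately be invoked.
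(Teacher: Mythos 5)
Your proposal is correct and follows essentially the same route as the paper, whose own proof is only a two-line citation: existence of the attractors from Theorem \ref{atractorpullback} and its autonomous version, and the bound by $\phi$ from Proposition \ref{proposicion12.8} together with the comparison machinery of Corollary \ref{corolariocomparacion}. You simply make explicit the details the paper leaves implicit — that \eqref{f-} and \eqref{f+} are constant-coefficient instances of \eqref{problematimereparametrizado} so Proposition \ref{proposicion12.8}(iii) applies to all three flows with the same $\phi$, and that the pointwise bound passes to attractor points via the $\omega$-limit/complete-trajectory characterization and the embedding $H^1_0(0,1)\hookrightarrow L^\infty(0,1)$.
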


\begin{proof}
    The existence of the attractors is guaranteed by Theorem \ref{atractorpullback} and its autonomous version, when $h=0$, and the comparison between them follows using Corollary \ref{corolariocomparacion} and the uniform bounds follow as in Theorem \ref{atractorpullback}. 

\end{proof}
  
\section*{Data availability statement}
The data that support the findings of this study are available within the article.

\section*{Conflict of interest}
The authors declare that they have no conflict of interest.

\bibliographystyle{acm}

\end{document}